\newcommand{\comma}{,}
\definecolor{linkcolor}{rgb}{0.6,0.0,0.0}
\definecolor{citecolor}{rgb}{0.0,0.6,0.0}
\definecolor{urlcolor} {rgb}{0.0,0.0,0.6}
\newcommand{\beq}{\begin{equation}}
\newcommand{\eeq}{\end{equation}}
\newcommand{\dfn}[1]{\emph{#1}}
\renewcommand{\ge}{\geqslant}
\renewcommand{\le}{\leqslant}
\newcommand{\OG}{\operatorname{O}}
\newtheorem{prop}{Proposition}
\newtheorem*{mthm}{Main Theorem}
\newtheorem{thm}{Theorem}
\newtheorem{cor}{Corollary}
\theoremstyle{definition}
\newtheorem{definition}{Definition}
\theoremstyle{remark}
\newtheorem{rem}{Remark}
\numberwithin{equation}{section}
\begin{document}

\selectlanguage{english}

\title{Separation coordinates, moduli spaces and Stasheff polytopes}

\author{K. Sch\"obel}
\address{Friedrich-Schiller-Universit\"at Jena,
	Fakult\"at f\"ur Mathematik und Informatik,
	Ernst-Abbe-Platz 2,
	07743 Jena, Germany}
\email{konrad.schoebel@uni-jena.de}

\author{A. P. Veselov}
\address{Department of Mathematics,
	Loughborough University, Loughborough,
	Leicestershire, LE11 3TU, UK and
	Moscow State University, Russia}
\email{A.P.Veselov@lboro.ac.uk}

\subjclass[2010]{
	Primary
	14D21;  
	Secondary
	53A60,  
	58D27
}

\keywords{
	Separation of variables,
	St\"ackel systems,
	Deligne-Mumford moduli spaces,
	stable curves with marked points,
	associahedra,
	Stasheff polytopes,
	mosaic operad%
}

\begin{abstract}
	We show that the orthogonal separation coordinates on the sphere $S^n$ are naturally
	parametrised by the real version of the Deligne-Mumford-Knudsen moduli
	space $\bar M_{0,n+2}(\mathbb R)$ of stable curves of genus zero with
	$n+2$ marked points. We use the combinatorics of Stasheff polytopes
	tessellating $\bar M_{0,n+2}(\mathbb R)$ to classify the different
	canonical forms of separation coordinates and deduce an explicit
	construction of separation coordinates as well as of St\"ackel systems from the
	mosaic operad structure on $\bar M_{0,n+2}(\mathbb R)$.
\end{abstract}

\maketitle

\section{Introduction}

Separation of variables is one of the oldest techniques in mathematical physics,
which still remains one of the most effective and powerful tools in the theory of integrable systems.
Its quantum version initiated by Lam\'e \cite{Lame} is actually more natural than the classical one developed
approximately at the same time by Jacobi \cite{Jac},
when one has to consider the Hamilton-Jacobi equation rather than the equations of motion.

The general theory of separation coordinates goes back to St\"ackel \cite{Staeckel} and Levi-Civita \cite{LC}
and was developed further by Eisenhart \cite{Eis}. In the particular case of the sphere $S^n$, which we will be interested in,
the first (and the most important) example, that of elliptic coordinates, was introduced already in 1859 by C.~Neumann \cite{Neu}.
The classification problem of all separation coordinates on $S^n$ had been studied in detail
by Olevski \cite{Olev} and Kalnins \& Miller \cite{KM86}.
In particular, in the last paper to describe the answer the authors used a sophisticated graphical procedure
similar to the one developed by Vilenkin to describe the polyspherical coordinates \cite{Vil}.

Despite all these advances almost nothing has been known about the global
geometry of the space of separation coordinates.  In the present paper we fill
this gap by describing the topology and algebraic geometry of this space in
the case of $S^n$. In particular, we link the graphical procedures from
\cite{KM86} and \cite{Vil} with the rich combinatorial theory of {\it
associahedra}, or {\it Stasheff polytopes}, introduced by Stasheff in 1963 in
homotopy theory \cite{Stas}.

The reason this has never been done before is probably that the algebraic
equations involved seemed far too complicated for a direct solution.
Interestingly, St\"ackel already commented on this in his habilitation thesis
from 1891, where he notes \cite[p.~6]{Staeckel}:
\begin{quote}
	\it
	\selectlanguage{german}
	Die Diskussion dieser Gleichungen ergab, dass es f\"ur $n=2$ drei wesentlich
	verschiedene Formen der Gleichung $H=0$ giebt, bei denen diese notwendigen
	Bedingungen erf\"ullt sind, und da diese Gleichungen auch wirklich
	Separation der Variabeln gestatten, ist die Frage f\"ur den Fall $n=2$
	vollst\"andig erledigt.  Aber schon f\"ur $n=3$ werden die algebraischen
	Rechnungen so umst\"andlich, dass mir eine weitere Verfolgung dieses Weges
	aussichtslos erschien.
	\selectlanguage{english}
	\footnote{
		``For $n=2$ the discussion of these equations yielded three essentially
		different forms of the equation $H=0$ [the Hamilton-Jacobi equation],
		for which the necessary conditions are satisfied, and since these
		equations indeed allow a separation of variables, the question is
		completely settled in the case $n=2$.  However, already for $n=3$ the
		algebraic computations become so cumbersome, that it seemed hopeless
		to me to pursue this approach further.''
	}
\end{quote}
In a sense, our goal in this paper is to accomplish St\"ackel's computations
for arbitrary $n$, using a substantial progress made in the theory of moduli
spaces in the last few decades.  More precisely, we prove the following
result.
\begin{mthm}
	The St\"ackel systems on $S^n$ with diagonal algebraic
	curvature tensor form a smooth projective variety isomorphic to
	the real Deligne-Mumford-Knudsen moduli space $\bar M_{0,n+2}(\mathbb R)$
	of stable genus zero curves with $n+2$ marked points.
\end{mthm}
As a corollary we have that the set $X_n$ of equivalence classes of separation
coordinates on the sphere $S^n$ modulo the orthogonal group is in one-to-one
correspondence with the quotient space $Y_n=\bar M_{0,n+2}(\mathbb
R)/S_{n+1}$. Since the real version of the Deligne-Mumford-Knudsen moduli
space $\bar M_{0,n+2}(\mathbb R)$ is known to be tessellated by $(n+1)!/2$
copies of the Stasheff polytope $K_n$ after Kapranov \cite{Kap} and Devadoss \cite{Devadoss},
we can use the known
results about $\bar M_{0,n+2}(\mathbb R)$ and associahedra \cite{DR} to
describe the combinatorial structure of $X_n$. In particular, we use the
mosaic operad \cite{Devadoss} to give an explicit construction for St\"ackel
systems and separation coordinates.

Note that in this way we establish (and exploit) a surprising correspondence
between two seemingly completely unrelated objects -- separation coordinates
on a sphere on one hand and stable genus zero curves with marked points on the
other hand -- revealing yet another guise of the famous moduli space $\bar
M_{0,n}(\mathbb R)$.

The algebraic nature of the problem of separation of variables was explicitly revealed in \cite{Schoebel1}, where the Nijenhuis integrability conditions for Killing tensors were reduced to purely algebraic equations for the associated algebraic curvature tensors.  The statement of the Main Theorem stems from a thorough analysis of these equations in the first non-trivial case $n=3$, done in \cite{Schoebel}. Its proof is based on the recent work by Aguirre, Felder and the second author \cite{AFV}, where the moduli space $\bar M_{0,n+2}$ was identified with the set of the Gaudin subalgebras in the Kohno-Drinfeld Lie
algebra $\mathfrak t_{n+1}$. We show that the Killing tensors on $S^n$ with
diagonal algebraic curvature tensor satisfy the defining relations of the
Kohno-Drinfeld Lie algebra, which allows us to make the link with the main
result of \cite{AFV}.

\section{Separation coordinates, Killing tensors and St\"ackel systems}

\subsection{Separation coordinates}

Recall that the Hamilton-Jacobi equation
\[
	\tfrac12g^{ij}\frac{\partial W}{\partial x^i}\frac{\partial W}{\partial x^j}=E
\]
\dfn{separates} in local coordinates $x^1,\ldots,x^n$ on a Riemannian manifold
$M^n$ if it admits a solution of the form
\[
	W(x^1,\ldots,x^n;\underline c)=W_1(x^1;\underline c)+\ldots+W_n(x^n;\underline c),
	\qquad
	\det\left(\frac{\partial^2W}{\partial x^i\partial c_j}\right)\not=0,
\]
depending on $n$ parameters $\underline c=(c_1,\ldots,c_n)$.  Note that if we
reparametrise each coordinate $x^i$ with a strictly monotonic function
$\Phi_i$, the Hamilton-Jacobi equation is still separable in the new
coordinates $\Phi_i(x^i)$.  The same is true for a permutation of the
variables.  In order to avoid this arbitrariness, we consider different
coordinate systems as equivalent if they are related by such transformations.
By abuse of language we will call a corresponding equivalence class simply
\dfn{separation coordinates}.  Equivalently, we can think of separation
coordinates as the (unordered) system of coordinate hypersurfaces defined by
the equations $x^i=\text{constant}$.  The separation coordinates are called
\dfn{orthogonal}, if the normals of these hypersurfaces are mutually
orthogonal.

The main tool in studying orthogonal separation coordinates are Killing
tensors that satisfy a certain condition.  The details of this relation will
be explained in the rest of this section, with emphasis on spheres.

\subsection{Killing tensors}

\begin{definition}
	\label{def:Killing}
	A \dfn{Killing tensor} on a Riemannian manifold $(M,g)$ is an element
	$K\in\Gamma(S^2T^*\!M)$ satisfying, in any coordinate system $x^{\alpha}$
	($\alpha=1,\dots,n$), the equation
	\begin{equation}
		\label{eq:Killing}
		\nabla_{\alpha}K_{\beta\gamma}+\nabla_{\beta}K_{\gamma\alpha}+\nabla_{\gamma}K_{\alpha\beta}=0,
	\end{equation}
	where $\nabla$ is the Levi-Civita connection of the metric $g$.
\end{definition}
Note that the metric $g$ is trivially a Killing tensor, because it is
covariantly constant: $\nabla_{\alpha}g_{\beta\gamma}=0.$  Here we will be concerned with Killing tensors on the
standard round sphere $S^n$, regarded as the hypersurface
\[
	S^n=\{x\in V\colon\;\lVert x\rVert=1\}\subset V
\]
of unit vectors in an $(n+1)$-dimensional Euclidean vector space $V$, equipped
with the induced metric $g$.
\begin{definition}
	An \dfn{algebraic curvature tensor} on a vector space $V$ is an element
	$R\in(V^*)^{\otimes4}$ satisfying the usual (algebraic) symmetries of a
	Riemannian curvature tensor, namely:
	\begin{subequations}
		\label{eq:R}
		\begin{align}
			\label{eq:R:anti}   R_{jikl}&=-R_{ijkl}=R_{ijlk} &&\text{(antisymmetry)}\\
			\label{eq:R:pair}   R_{klij}&=R_{ijkl}           &&\text{(pair symmetry)}&\\
			\label{eq:R:Bianchi}R_{ijkl}&+R_{iklj}+R_{iljk}=0&&\text{(Bianchi identity)}.
		\end{align}
	\end{subequations}
\end{definition}
The space of Killing tensors $K$ on $S^n\subset V$ is naturally isomorphic to the space
of algebraic curvature tensors $R$ on $V$ \cite{McLMS}.  This isomorphism is
explicitly given by the formula
\begin{equation}
	\label{eq:correspondence}
	K_x(v,w):=R(x,v,x,w)=\mspace{-8mu}\sum_{i,j,k,l=1}^{n+1}\mspace{-8mu}R_{ijkl}x^ix^kv^jw^l,
	\qquad
	x\in S^n,
	\quad
	v,w\in T_xS^n,
\end{equation}
where we consider a point $x\in S^n$ as well as the tangent vectors $v,w\in
T_xS^n$ as vectors in $V$ satisfying $\lVert x\rVert=1$ and $v,w\perp x$.  The
above isomorphism is equivariant under the natural actions of the isometry
group $\OG(V)$ on Killing tensors and on algebraic curvature tensors respectively.

\subsection{St\"ackel systems}

In Definition~\ref{def:Killing}, a Killing tensor is a symmetric bilinear form
$K_{\alpha\beta}$ on the manifold $M$.  In what follows we will interpret it
in two other ways, each of which gives rise to a Lie bracket and hence to a
Lie algebra generated by Killing tensors.  On one hand, we can use the metric
to identify the symmetric bilinear form $K_{\alpha\beta}$ with a symmetric
endomorphism ${K^\alpha}_\beta$.  Interpreted in this way, the space of
Killing tensors generates a Lie subalgebra of $\Gamma(\mathrm{End}(TM))$ with
respect to the commutator bracket
\begin{equation}
	\label{eq:commutator}
	[K,L]=KL-LK.
\end{equation}
On the other hand, we can interpret a Killing tensor $K_{\alpha\beta}$ as a
function $K_{\alpha\beta}p^\alpha p^\beta$ on the total space of the cotangent
bundle $T^*\!M$ which is quadratic in the fibres.  Interpreted in this way,
the space of Killing tensors generates a Lie subalgebra of $C^\infty(T^*\!M)$
with respect to the Poisson bracket
\begin{equation}
	\label{eq:Poisson}
	\{K,L\}
	=\sum_{\alpha=1}^n
	\left(
		\frac{\partial K}{\partial x^\alpha}
		\frac{\partial L}{\partial p_\alpha}
		-
		\frac{\partial L}{\partial x^\alpha}
		\frac{\partial K}{\partial p_\alpha}
	\right).
\end{equation}
\begin{definition}
	\label{def:Staeckel}
	A \dfn{St\"ackel system} on an $n$-dimensional Riemannian manifold is an
	$n$-dimensional space of Killing tensors which mutually commute with
	respect to both of the following brackets:
	\begin{enumerate}
		\item \label{it:commutator} the commutator bracket \eqref{eq:commutator}
		\item \label{it:Poisson}    the Poisson bracket \eqref{eq:Poisson}
	\end{enumerate}
\end{definition}

\begin{rem}
	In the initial definition given by St\"ackel, condition (\ref{it:Poisson})
	is replaced by an integrability condition on the eigen spaces of the
	Killing tensors \cite{Staeckel}.  The equivalence of both definitions is
	proven in \cite{Ben}.
\end{rem}

It can be shown that every St\"ackel system contains a Killing tensor with
simple eigenvalues \cite{Ben}.  Moreover, the $n$ distributions given by the
orthogonal complements of its eigendirections are integrable.  Hence they
define $n$ hypersurface foliations with orthogonal normals or, equivalently,
orthogonal coordinates.  On the other hand, every Killing tensor commuting
with the above and having simple eigenvalues defines the same coordinates.  In
this manner each St\"ackel system defines a unique coordinate system.  It is a
classical result that these are separation coordinates and that every system
of orthogonal separation coordinates arises in this way from a St\"ackel system:
\begin{thm}[\cite{Staeckel,Eis,Ben}]
	\label{thm:correspondence}
	There is a bijective correspondence between St\"ackel systems and
	orthogonal separation coordinates.
\end{thm}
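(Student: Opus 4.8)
The plan is to prove the two directions of the correspondence separately, the crux being to match the two Lie-algebraic conditions of Definition~\ref{def:Staeckel} with the classical analytic criterion for separability. First I would pass from a St\"ackel system to coordinates. As recalled above, by \cite{Ben} the system contains a Killing tensor $K$ with pointwise simple eigenvalues. Condition~(\ref{it:commutator}), the vanishing of the commutator \eqref{eq:commutator}, says that all members of the system are simultaneously diagonalisable as symmetric endomorphisms, hence share a common orthonormal eigenframe $e_1,\dots,e_n$ fixed by the simple eigenvalues of $K$. The next --- and decisive --- step is to show that the $n$ eigendistributions $e_i^\perp$ are integrable. Here I would invoke the equivalence, due to \cite{Ben} and noted in the Remark, between the Poisson condition~(\ref{it:Poisson}) and St\"ackel's original eigenspace-integrability condition: granting this, integrability of the distributions is exactly what the Poisson commutativity \eqref{eq:Poisson} buys us. By Frobenius the integral hypersurfaces define orthogonal coordinates $x^1,\dots,x^n$ with $\partial/\partial x^i$ aligned to $e_i$, unique up to the reparametrisations and permutations allowed by our equivalence relation.

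It then remains to verify that these orthogonal coordinates actually separate the Hamilton-Jacobi equation. For this I would write the metric in the frame $e_i$, where it is diagonal, and show that the presence of $n$ Killing tensors commuting in both senses forces the $g^{ii}$ into \emph{St\"ackel form}, i.e. the inverse metric coefficients are controlled by an $n\times n$ matrix whose $i$-th row depends on $x^i$ only. The existence of $n$ functionally independent quadratic integrals in involution, all diagonal in the same coordinates, is precisely the data of a St\"ackel matrix, and the classical theorems of St\"ackel \cite{Staeckel} and Levi-Civita \cite{LC}, as developed by Eisenhart \cite{Eis}, then guarantee separability.

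Conversely, given orthogonal separation coordinates $x^1,\dots,x^n$, I would reconstruct the system. By St\"ackel's theorem the metric is diagonal with each $1/g^{ii}$ expressible through the cofactors of a St\"ackel matrix $\varphi=(\varphi_{ij})$ whose $i$-th row depends on $x^i$ alone. Using these cofactors I would define $n$ symmetric tensors $K^{(a)}$ that are diagonal in the given coordinates; one checks by direct local computation that each satisfies the Killing equation \eqref{eq:Killing}, that they pairwise commute under \eqref{eq:commutator} (being diagonal in a common frame), and that they Poisson-commute under \eqref{eq:Poisson} (the classical involution of separated Hamiltonians). Since the rows of $\varphi$ are functionally independent, the $K^{(a)}$ span an $n$-dimensional space and form a St\"ackel system. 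A final comparison shows the two constructions are mutually inverse, whence the correspondence is bijective.

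The hard part will be the differential-geometric step establishing integrability of the eigendistributions --- equivalently, proving the equivalence of condition~(\ref{it:Poisson}) with eigenspace integrability that the Remark attributes to \cite{Ben}. Concretely this amounts to showing that the Haantjes (Nijenhuis) torsion of $K$ vanishes as a consequence of the Killing equation \eqref{eq:Killing} together with the two bracket conditions, so that the common eigenframe is genuinely holonomic; all the remaining verifications are essentially routine coordinate computations once the St\"ackel form of the metric is in hand.
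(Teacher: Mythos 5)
Your outline is correct and follows essentially the same route as the paper, which does not prove Theorem~\ref{thm:correspondence} in detail but cites \cite{Staeckel,Eis,Ben} and sketches exactly your argument: a common eigenframe from the commutator condition, integrability of the eigendistributions via Benenti's equivalence of the Poisson condition with eigenspace integrability, and the St\"ackel-matrix construction for separability and for the converse. The one step you rightly flag as hard (deriving integrability from condition~(\ref{it:Poisson})) is precisely what the paper also delegates to \cite{Ben}, so your proposal and the paper's treatment coincide in both structure and in what is left to the references.
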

\begin{rem}
	A priori, the above result is only a local result.  However, any local
	Killing tensor field on a sphere can be extended to a global one (see
	e.\,g.\ \cite{BKW}).  Hence the same is true for the corresponding
	separation coordinates.  That is why we can use the above result for a
	global classification of orthogonal separation coordinates on $S^n$.
\end{rem}

\subsection{Killing tensors with diagonal algebraic curvature tensor}

In the following we will only consider Killing tensors on $S^n$ whose
algebraic curvature tensor is diagonal in the following sense.
\begin{definition}
	Due to the symmetries \eqref{eq:R:anti} and \eqref{eq:R:pair}, we can
	interpret an algebraic curvature tensor $R$ on $V$ as a symmetric bilinear
	form on $\Lambda^2V$.  We say that $R$ is \dfn{diagonal} in an orthonormal
	basis $\{e_i:1\le i\le n+1\}$ of $V$, if it is diagonal as a bilinear form
	on $\Lambda^2V$ in the associated basis $\{e_i\wedge e_j:1\le i<j\le
	n+1\}$.  In components, this simply means that $R_{ijkl}=0$ unless
	$\{i,j\}=\{k,l\}$.
\end{definition}
Restricting to Killing tensors with diagonal algebraic curvature tensor does
not mean any loss of generality.  The reason is the following refinement of
Theorem~\ref{thm:correspondence} for spheres.
\begin{thm}[\cite{BKW}]
	\label{thm:diagonalisability}
	Necessary and sufficient conditions for the existence of an orthogonal
	separable coordinate system for the Hamilton-Jacobi equation on $S^n$ are
	that there are $n$ Killing tensors with diagonal algebraic curvature
	tensor, one of which is the metric, which are linearly independent
	(locally) and pairwise commute with respect to the Poisson bracket.
\end{thm}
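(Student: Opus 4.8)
The plan is to reduce the statement to the general correspondence of Theorem~\ref{thm:correspondence} between St\"ackel systems and orthogonal separation coordinates, and then to establish a simultaneous normal form for the algebraic curvature tensors appearing in a St\"ackel system on $S^n$. The main device throughout would be the $\OG(V)$-equivariant isomorphism \eqref{eq:correspondence} between Killing tensors on $S^n$ and algebraic curvature tensors on $V$, which converts the choice of a good coordinate frame into the choice of a good orthonormal basis $\{e_i\}$ of $V$. A useful preliminary computation is that for an algebraic curvature tensor diagonal in $\{e_i\}$, with $\rho_{ij}:=R_{ijij}$, the associated Killing tensor takes the explicit form
\[
	K_x(v,w)=\sum_{i<j}\rho_{ij}\,(x^iv^j-x^jv^i)(x^iw^j-x^jw^i),
\]
and that the metric corresponds to the constant-curvature tensor (all $\rho_{ij}=1$), which is diagonal in \emph{every} orthonormal basis; this accounts for the special role of the metric in the statement.

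For the necessity direction I would start from a system of orthogonal separation coordinates, pass via Theorem~\ref{thm:correspondence} to the associated St\"ackel system, and use \cite{Ben} to pick out a Killing tensor $K$ in it with pointwise simple eigenvalues whose mutually orthogonal eigendistributions are integrable. The first step is to rewrite this integrability, a differential condition on $K$, as a purely algebraic condition on $R$ by means of \eqref{eq:correspondence} together with the identity $K_x(v,w)=\langle R(x\wedge v),x\wedge w\rangle$ for $R$ regarded as a symmetric operator on $\Lambda^2V$. The second step is to show that this algebraic (Nijenhuis-type) condition forces $R$ to be diagonalisable as a bilinear form on $\Lambda^2V$ by an element of $\OG(V)$ acting through $\Lambda^2$, rather than by a general element of $\OG(\Lambda^2V)$; the eigenbasis of the symmetric endomorphism of $V$ underlying the separation data supplies the orthonormal frame $\{e_i\}$. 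Finally, since every element of the St\"ackel system commutes with $K$ under both brackets, I would check that each shares this eigenstructure and is therefore diagonal in the same basis $\{e_i\}$, and that the metric may be taken to lie in the system.

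For the sufficiency direction I would take $n$ linearly independent, pairwise Poisson-commuting Killing tensors whose algebraic curvature tensors are simultaneously diagonal in one orthonormal basis, one of them the metric, and show they constitute a St\"ackel system, after which Theorem~\ref{thm:correspondence} produces the coordinates. Using the explicit diagonal form above, both the Poisson bracket \eqref{eq:Poisson} and the commutator bracket \eqref{eq:commutator} of two such tensors can be written down in terms of the diagonal data $\rho_{ij}$ and $\rho'_{ij}$. The crux is then a \emph{bracket lemma}: for diagonal curvature tensors, and in the presence of the metric, the two commutativity conditions coincide, so that vanishing of the Poisson bracket already forces vanishing of the commutator bracket; this supplies condition~(\ref{it:commutator}) of Definition~\ref{def:Staeckel} for free and completes the St\"ackel system.

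I expect this bracket lemma to be the main obstacle, since a priori diagonal curvature tensors neither commutator-commute nor share eigendirections, and the equivalence of the two brackets is a genuinely special feature of the diagonal class. The necessity step carries its own difficulty, namely turning the integrability of the eigendistributions into a clean algebraic statement and recognising it as simultaneous $\OG(V)$-diagonalisability. Both difficulties, however, rest on the explicit diagonal expression for $K_x$ and on the equivariance of \eqref{eq:correspondence}, so the computations, though lengthy, should remain structurally transparent once that normal form is in hand.
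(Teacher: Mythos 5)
The paper does not prove this statement at all: it is imported verbatim from Boyer--Kalnins--Winternitz \cite{BKW}, so there is no internal proof to compare against. Judged on its own merits, your proposal gets the easy half right and leaves the hard half unproved. The sufficiency direction is sound: your ``bracket lemma'' is true, and it is in effect what Propositions~\ref{prop:commutator:relations}, \ref{prop:Poisson:relations} and \ref{prop:commutator:independence} of the paper establish --- the generators $K_{ij}$ satisfy identical relations under the commutator and Poisson brackets, and the surviving brackets $[K_{ij},K_{jk}]$, $\{K_{ij},K_{jk}\}$ are linearly independent, so for Killing tensors in the diagonal class the two commutativity conditions coincide. Your explicit formula $K_x(v,w)=\sum_{i<j}\rho_{ij}(x^iv^j-x^jv^i)(x^iw^j-x^jw^i)$ and the identification of the metric with $\rho_{ij}\equiv1$ are also correct.

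The genuine gap is in the necessity direction, at precisely the step that constitutes the entire content of the theorem: the claim that the Nijenhuis-type integrability condition on $R$ forces it to be diagonalisable as a form on $\Lambda^2V$ by an element of $\OG(V)$ acting through $\Lambda^2$. You state this as ``I would show\dots'' with no argument, but it is not a routine computation. The $\OG(V)$-orbit of diagonal forms is a proper subvariety of $S^2\Lambda^2V$ of large codimension (compare $\dim\OG(V)+\binom{N}{2}$ with $\dim S^2\Lambda^2V$ for $N=n+1$), so diagonalisability is a strong conclusion that the algebraic integrability conditions must deliver; even for $n=3$ extracting it required the ``thorough analysis'' of \cite{Schoebel}, and \cite{BKW} obtain the general case by a structural analysis of separable coordinates rather than by solving these equations. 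Your final propagation step has a secondary weakness of the same kind: commutator-commuting with $K$ gives simultaneous diagonalisation of endomorphisms of $T_xS^n$ pointwise, which is not the same as the other curvature tensors being diagonal in the basis $\{e_i\wedge e_j\}$ of $\Lambda^2V$; the passage from the pointwise eigenframe on the sphere to a single orthonormal basis of the ambient $V$ is exactly what needs to be justified and is again asserted rather than proved.
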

\begin{rem}
	\label{rem:commutator-equivalence}
	By this theorem condition (\ref{it:Poisson}) in
	Definition~\ref{def:Staeckel} implies condition (\ref{it:commutator}) on
	$S^n$.  As we will show in Section~\ref{sec:correspondence} below, both
	conditions are actually equivalent for $S^n$.
\end{rem}

The restriction to separation coordinates which are orthogonal does not
constitute a loss of generality either, because of the following result.
\begin{thm}[\cite{KM86}]
	\label{thm:orthogonal}
	All separation coordinates on $S^n$ are equivalent to
	orthogonal separation coordinates.
\end{thm}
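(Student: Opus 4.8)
The plan is to peel off the source of non-orthogonality, which in the classical theory is confined to the \emph{ignorable} (cyclic) coordinates, and to show that on the sphere these can always be recombined into an orthogonal configuration. First I would invoke the structure theory of separable systems due to Levi-Civita and Eisenhart (see \cite{Eis,Ben}): in any separable coordinate system the coordinates split into \emph{essential} coordinates and \emph{ignorable} coordinates $\phi^1,\dots,\phi^r$, where each $\phi^a$ is cyclic and thus contributes a linear first integral $p_{\phi^a}$ corresponding to a Killing vector $X_a$, and these Killing vectors mutually commute. In this normal form the metric is block structured: the essential block is already diagonal, and the \emph{only} off-diagonal contributions are the cross terms $g_{\phi^a\phi^b}$ within the ignorable block. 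Thus non-orthogonality is entirely encoded in a symmetric matrix $A_{ab}=g_{\phi^a\phi^b}$, whose entries are functions of the essential coordinates alone (this being exactly what makes the $\phi^a$ cyclic).

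Next I would use the specific geometry of $S^n\subset V$. A maximal commuting family of Killing vectors $X_1,\dots,X_r$ corresponds to a decomposition of the ambient Euclidean space $V$ into mutually orthogonal coordinate $2$-planes (plus a fixed orthogonal complement), with $X_a$ generating the rotation of the $a$-th plane. The complete system of commuting first integrals attached to the separable structure is then spanned by the linear integrals $p_{\phi^1},\dots,p_{\phi^r}$ together with the quadratic integrals coming from the essential block.

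The key step is to trade the linear integrals for quadratic ones. Since a symmetric product of Killing vectors is again a Killing tensor --- because $\{H,p_{X}p_{Y}\}=p_Y\{H,p_X\}+p_X\{H,p_Y\}=0$ whenever $X,Y$ are Killing --- each $p_{\phi^a}^2$ is the integral of the Killing tensor $X_a\odot X_a$, and more generally every combination $\sum_{a,b}c_{ab}\,p_{\phi^a}p_{\phi^b}$ is such an integral. These manifestly remain in involution with one another and with the essential integrals. I would then diagonalise the ignorable block by recombining the ignorable coordinates --- allowing both constant linear recombinations $\tilde\phi^a=\sum_b c_{ab}\phi^b$ and shifts $\tilde\phi^a=\phi^a+f^a$ by functions of the essential coordinates, which is the natural freedom in the choice of cyclic coordinates --- so that the new coordinate hypersurfaces become mutually orthogonal and orthogonal to the essential ones. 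The resulting $n$ Killing tensors mutually commute with respect to both brackets and possess a common orthogonal eigenframe, hence form a St\"ackel system in the sense of Definition~\ref{def:Staeckel}; by Theorem~\ref{thm:correspondence} they define a system of orthogonal separation coordinates equivalent, in the sense of \cite{KM86}, to the original one.

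The main obstacle I expect is exactly this diagonalisation step: one must show that the symmetric matrix $A_{ab}$ of the ignorable block can be brought to diagonal form \emph{simultaneously at every point}, compatibly with the essential foliation, rather than merely pointwise. This is where the constant curvature of $S^n$ --- equivalently the rigidity and abundance of the $\OG(V)$-action --- becomes indispensable, and I would organise the argument as an induction on the number $r$ of ignorable coordinates, splitting off one rotation $2$-plane at a time and invoking the inductive hypothesis on the lower-dimensional essential factor, which is again a space of constant curvature.
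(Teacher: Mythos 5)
First, note that the paper does not actually prove this statement: Theorem~\ref{thm:orthogonal} is imported wholesale from Kalnins \& Miller \cite{KM86}, the only gloss being the remark that the equivalence is realised by a linear change of the ignorable coordinates. So your attempt has to stand on its own, and as written it has a genuine gap at precisely the step you yourself flag as the ``main obstacle''. The reduction is fine: in the Levi-Civita/Eisenhart normal form the metric is block diagonal, the essential block is diagonal, and all non-orthogonality sits in the symmetric matrix $A_{ab}(y)=g_{\phi^a\phi^b}$ depending only on the essential coordinates $y$. But the shifts $\tilde\phi^a=\phi^a+f^a(y)$ you allow do not touch $A_{ab}$ at all (they only alter the essential--ignorable cross terms, which already vanish in normal form), so everything hinges on finding a single \emph{constant} matrix $C$ with $C^{\mathsf T}A(y)C$ diagonal for \emph{every} $y$. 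This is false for a general family of symmetric matrices, and it is also false for a general commuting family of Killing vectors on a sphere: such a family spans an $r$-dimensional subspace of a Cartan subalgebra of $\mathfrak{so}(n+1)$, and your assertion that the $X_a$ ``generate the rotation of the $a$-th plane'' silently replaces this subspace by one spanned by the basic plane rotations $R_1,\dots,R_m$. For $r<m$ that is not automatic. Since $\langle\textstyle\sum_j a_jR_j,\sum_j b_jR_j\rangle_x=\sum_j a_jb_j\,(x_{2j-1}^2+x_{2j}^2)$, two such combinations are everywhere orthogonal only if essentially all products $a_jb_j$ vanish; the span of $R_1+R_2$ and $R_2+R_3$ in $\mathfrak{so}(6)$, for instance, contains no pair of linearly independent elements orthogonal at every point of $S^5$, so its Gram matrix cannot be diagonalised by any constant congruence.

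The missing ingredient is therefore a proof that the commuting families which actually occur as the ignorable part of a \emph{separable} system are special enough to admit such a basis --- equivalently, that the Levi-Civita separability conditions force $A_{ab}(y)$ into the simultaneously diagonalisable form $\sum_j c^j_a c^j_b\,\rho_j(y)^2$ with profile functions $\rho_j^2$ attached to mutually orthogonal $2$-planes of $V$. That structural statement is the real content of the theorem and of the case analysis in \cite{KM86} (building on Eisenhart's theory \cite{Eis}); it does not follow from the mere commutativity of the Killing vectors, and your proposed induction ``one rotation $2$-plane at a time'' presupposes it rather than establishes it. What your argument does correctly show is the easy converse half: \emph{if} the ignorable block can be diagonalised by a constant linear substitution, then the quadratic integrals $\sum_{a,b}c_{ab}p_{\phi^a}p_{\phi^b}$ stay in involution with the essential ones, and one obtains a St\"ackel system and hence orthogonal separation coordinates via Theorem~\ref{thm:correspondence}.
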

The equivalence corresponds to a linear change of the so-called ignorable
coordinates, on which the metric does not depend (see \cite{KM86}). We will
consider separation coordinates up to this equivalence throughout this paper.

Theorems~\ref{thm:diagonalisability} and \ref{thm:orthogonal} reduce the
classification of separation coordinates on spheres to the purely algebraic
problem of finding certain abelian subalgebras in the following two Lie
algebras.
\begin{definition}
	\label{def:d_n}
	We denote by $\mathfrak d_{n+1}\subset\Gamma(\mathrm{End}(TS^n))$ and by
	$\mathscr D_{n+1}\subset C^\infty(T^*\!S^n)$ the Lie subalgebras generated
	by Killing tensors with diagonal algebraic curvature tensor under the
	commutator bracket \eqref{eq:commutator} and the Poisson
	bracket \eqref{eq:Poisson} respectively.
\end{definition}
By definition, a diagonal algebraic curvature tensor $R$ is uniquely
determined by the diagonal elements $R_{ijij}$ for $1\le i<j\le n+1$.  Indeed,
the symmetries \eqref{eq:R:anti} and \eqref{eq:R:pair} determine the
components $R_{ijij}=-R_{ijji}=R_{jiji}=-R_{jiij}$ for $i<j$.  And if we set
all other components to zero, the resulting tensor $R$ satisfies all
symmetries \eqref{eq:R} of an algebraic curvature tensor.  For fixed $i$ and
$j$, let $K_{ij}$ be the Killing tensor with diagonal algebraic curvature
tensor $R$ given by
\begin{equation}
	\label{eq:R:diag}
	R_{ijij}=-R_{ijji}=R_{jiji}=-R_{jiij}=1
\end{equation}
and all other components zero.  Then $K_{ij}$ with $i<j$ form a basis of
the space of Killing tensors on $S^n$ with diagonal algebraic curvature
tensor and constitute a set of generators for both Lie
algebras, $\mathfrak d_{n+1}$ and $\mathscr D_{n+1}$.  The following two
propositions show that they satisfy the same relations in $\mathfrak d_{n+1}$
and in $\mathscr D_{n+1}$.
\begin{prop}
	\label{prop:commutator:relations}
	Let $K_{ij}$, $1\le i<j\le n+1$ be the basis of the space of Killing tensors on $S^n$
	with diagonal algebraic curvature tensor, as defined above.  For
	convenience we set $K_{ji}:=K_{ij}$.  Then $K_{ij}$ satisfy the
	following relations in $\mathfrak d_{n+1}$:
	\begin{subequations}
		\label{eq:commutator:relations}
		\begin{align}
			\label{eq:commutator:relations:disjoint}
			[K_{ij},K_{kl}]&=0&&\text{if $i,j,k,l$ are distinct}\\
			\label{eq:commutator:relations:overlapping}
			[K_{ij},K_{ik}+K_{jk}]&=0&&\text{if $i,j,k$ are distinct}.
		\end{align}
	\end{subequations}
\end{prop}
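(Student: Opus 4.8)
The plan is to make the endomorphisms $K_{ij}$ completely explicit and to reduce both relations to elementary identities for rank-one operators, computed pointwise on the tangent spaces $T_xS^n$. First I would substitute the diagonal curvature tensor \eqref{eq:R:diag} into the correspondence \eqref{eq:correspondence}. Expanding the four nonzero components shows that the bilinear form factorises,
\[
	(K_{ij})_x(v,w)=(x^iv^j-x^jv^i)(x^iw^j-x^jw^i),
\]
so that, introducing the vector $X_{ij}:=x^ie_j-x^je_i$, which satisfies $X_{ij}\perp x$ and hence lies in $T_xS^n$, we have $(K_{ij})_x(v,w)=\langle X_{ij},v\rangle\langle X_{ij},w\rangle$. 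Raising an index with the induced (Euclidean) metric, the associated endomorphism is the rank-one operator $K_{ij}v=\langle X_{ij},v\rangle X_{ij}$, in outer-product notation $K_{ij}=X_{ij}\otimes X_{ij}^\flat$.

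Next I would record the general commutator identity for rank-one operators: since $(a\otimes a^\flat)(b\otimes b^\flat)=\langle a,b\rangle\,a\otimes b^\flat$, one has $[a\otimes a^\flat,b\otimes b^\flat]=\langle a,b\rangle(a\otimes b^\flat-b\otimes a^\flat)$. Every commutator is therefore governed by the inner products
\[
	\langle X_{ij},X_{kl}\rangle=x^ix^k\delta_{jl}-x^ix^l\delta_{jk}-x^jx^k\delta_{il}+x^jx^l\delta_{ik}.
\]
For relation \eqref{eq:commutator:relations:disjoint}, when $i,j,k,l$ are distinct every Kronecker delta vanishes, so $\langle X_{ij},X_{kl}\rangle=0$ and hence $[K_{ij},K_{kl}]=0$ at every point.

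For relation \eqref{eq:commutator:relations:overlapping} the inner products no longer vanish; one computes $\langle X_{ij},X_{ik}\rangle=x^jx^k$ and $\langle X_{ij},X_{jk}\rangle=-x^ix^k$. The crucial ingredient is the linear (Pl\"ucker-type) identity
\[
	x^kX_{ij}-x^jX_{ik}+x^iX_{jk}=0,
\]
valid at each $x$, which I would verify by expanding the definitions. Setting $a=X_{ij}$, $b=X_{ik}$, $c=X_{jk}$ and adding the two commutators yields $x^k\bigl(a\otimes(x^jb-x^ic)^\flat-(x^jb-x^ic)\otimes a^\flat\bigr)$; substituting $x^jb-x^ic=x^ka$ from the identity collapses this to $(x^k)^2(a\otimes a^\flat-a\otimes a^\flat)=0$, establishing \eqref{eq:commutator:relations:overlapping}.

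I expect the only real obstacle to be organisational rather than computational: one must treat these as pointwise identities of endomorphisms on the varying tangent spaces $T_xS^n$ and, above all, notice the linear relation among $X_{ij},X_{ik},X_{jk}$. Once this relation is in hand, both relations follow from the rank-one algebra above.
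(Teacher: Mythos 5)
Your proposal is correct, and it reaches the two relations by a genuinely different route from the paper. Both arguments start from the same explicit pointwise form of $K_{ij}$ obtained by substituting \eqref{eq:R:diag} into \eqref{eq:correspondence}, and both implicitly use that the operator extends by zero to $V$ (the paper says this via the antisymmetry \eqref{eq:R:anti}, you via $X_{ij}\perp x$). From there the paper writes $\hat K_{ij}$ as an explicit matrix supported on the $i$-th and $j$-th rows and columns --- which gives \eqref{eq:commutator:relations:disjoint} at once --- and then verifies \eqref{eq:commutator:relations:overlapping} by a direct $3\times3$ matrix computation of $[\hat K_{ij},\hat K_{jk}]$ and $[\hat K_{ij},\hat K_{ik}]$, as in \eqref{eq:commutator:KijKjk}. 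You instead recognise the operator as the rank-one projector $X_{ij}\otimes X_{ij}^\flat$ with $X_{ij}=x^ie_j-x^je_i$, reduce every commutator to the inner products $\langle X_{ij},X_{kl}\rangle$ via the identity $[a\otimes a^\flat,b\otimes b^\flat]=\langle a,b\rangle(a\otimes b^\flat-b\otimes a^\flat)$, and close the overlapping case with the linear relation $x^kX_{ij}-x^jX_{ik}+x^iX_{jk}=0$; I checked the inner products $\langle X_{ij},X_{ik}\rangle=x^jx^k$ and $\langle X_{ij},X_{jk}\rangle=-x^ix^k$ and the final cancellation, and they are all correct. What each approach buys: the paper's computation is entirely elementary and yields the explicit antisymmetric matrix \eqref{eq:commutator:KijKjk}, which is then reused in the proof of Proposition~\ref{prop:commutator:independence}; your argument avoids matrix multiplication altogether, makes the disjoint relation an instance of orthogonality, and isolates the Pl\"ucker-type relation as the structural reason the Kohno--Drinfeld relation holds, at the cost of not producing the explicit commutator needed later for the independence statement.
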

\begin{proof}
	We can extend the Killing tensor $K$ on $T_xS^n$ to a symmetric tensor
	$\hat K$ on $V=T_xS^n\oplus\mathbb Rx$ by omitting the restriction
	$v,w\perp x$ in \eqref{eq:correspondence}.  The antisymmetry \eqref{eq:R:anti} implies
	that $K_x(v,x)=0$ for any $v\in V$, so $\hat K$ is the extension of $K$ by
	zero.  Consequently, we have $[\hat K_{ij},\hat
	K_{kl}]=\widehat{[K_{ij},K_{kl}]}$, so that it is sufficient to check the
	above relations on the corresponding extensions.  To do so, consider the
	Killing tensor $K_{ij}$ at a point $x\in S^n$.  By
	\eqref{eq:correspondence} and the definition \eqref{eq:R:diag} of the
	diagonal algebraic curvature tensor of $K_{ij}$ we have
	\begin{align*}
		\hat K_{ij}(v,w)
		&=\sum_{a,b=1}^{n+1}(R_{abab}x^ax^av^bw^b+R_{abba}x^ax^bv^bw^a)\\
		&=x^ix^iv^jw^j+x^jx^jv^iw^i-x^ix^jv^iw^j-x^ix^jv^jw^i.
	\end{align*}
	Let us put all indices down for convenience.
	Then we have
	\[
		\hat K_{ij}=
		\begin{pmatrix}
			x_j^2&-x_ix_j\\
			-x_ix_j&x_i^2
		\end{pmatrix},
	\]
	where we left only non-zero ($i$-th and $j$-th) rows
	and columns.  This already proves relation
	\eqref{eq:commutator:relations:disjoint}.  To check the remaining
	relation \eqref{eq:commutator:relations:overlapping}, we compute
	\begin{equation}
		\label{eq:commutator:KijKjk}
		\begin{split}
			[\hat K_{ij},\hat K_{jk}]
			&=
			\left[
				\begin{pmatrix}
					x_j^2&-x_ix_j&0\\
					-x_ix_j&x_i^2&0\\
					0&0&0
				\end{pmatrix},
				\begin{pmatrix}
					0&0&0\\
					0&x_k^2&-x_jx_k\\
					0&-x_jx_k&x_j^2
				\end{pmatrix}
			\right]\\
			&=
			x_ix_jx_k
			\begin{pmatrix}
				0&-x_k&x_j\\
				x_k&0&-x_i\\
				-x_j&x_i&0
			\end{pmatrix}
		\end{split}.
	\end{equation}
	Here we omitted rows and columns other than $i,j,k$, because they are
	zero.  In the same way we compute $[\hat K_{ij},\hat K_{ik}]$ and verify
	\eqref{eq:commutator:relations:overlapping}.
\end{proof}
\begin{prop}
	\label{prop:Poisson:relations}
	As elements of $\mathscr D_{n+1}$ the generators $K_{ij}$
	satisfy the following relations:
	\begin{subequations}
		\label{eq:Poisson:relations}
		\begin{align}
			\label{eq:Poisson:relations:disjoint}
			\{K_{ij},K_{kl}\}&=0&&\text{if $i,j,k,l$ are distinct}\\
			\label{eq:Poisson:relations:overlapping}
			\{K_{ij},K_{ik}+K_{jk}\}&=0&&\text{if $i,j,k$ are distinct}.
		\end{align}
	\end{subequations}
\end{prop}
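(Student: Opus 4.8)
The plan is to prove the Poisson relations in exact parallel with the commutator relations of Proposition 2.1, exploiting the fact that both brackets are evaluated on the same Killing tensors $K_{ij}$. First I would make the quadratic functions on $T^*S^n$ explicit. Interpreting each $K_{ij}$ as the function $(K_{ij})_{\alpha\beta}\,p^\alpha p^\beta$, the same extension-by-zero trick used before applies: omitting the restriction $v,w\perp x$ in \eqref{eq:correspondence} extends $K_{ij}$ to a quadratic form $\hat K_{ij}$ on all of $V$, and since $\hat K_{ij}(v,x)=0$ the extension agrees with $K_{ij}$ on each tangent space. Using the momenta $p_a$ dual to the ambient coordinates $x^a$, the computation of $\hat K_{ij}$ from the proof of Proposition~\ref{prop:commutator:relations} gives immediately
\[
	\hat K_{ij}=(x_i p_j-x_j p_i)^2,
\]
the square of the angular-momentum component in the $(i,j)$-plane. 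This closed form is the key simplification.

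Next I would reduce everything to Poisson brackets of the linear angular-momentum functions $L_{ij}:=x_i p_j - x_j p_i$. A direct computation from \eqref{eq:Poisson} shows that these satisfy the standard $\mathfrak{so}(n+1)$ relations: $\{L_{ij},L_{kl}\}=0$ when $i,j,k,l$ are distinct, while overlapping pairs close among the $L$'s, for instance $\{L_{ij},L_{jk}\}=L_{ik}$ (up to sign conventions). Because each $\hat K_{ij}=L_{ij}^2$ and the Poisson bracket is a derivation in each argument, every bracket $\{\hat K_{ij},\hat K_{kl}\}$ expands into a sum of products of $L$'s with single brackets $\{L_{ij},L_{kl}\}$ as factors. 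Relation \eqref{eq:Poisson:relations:disjoint} then follows at once, since all the relevant $L$'s commute when the four indices are distinct.

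For the overlapping relation \eqref{eq:Poisson:relations:overlapping} I would compute $\{\hat K_{ij},\hat K_{ik}+\hat K_{jk}\}=\{L_{ij}^2,\,L_{ik}^2+L_{jk}^2\}$ directly using the derivation property and the $\mathfrak{so}(n+1)$ relations among $L_{ij},L_{ik},L_{jk}$. Expanding, the cross terms $\{L_{ij}^2,L_{ik}^2\}$ and $\{L_{ij}^2,L_{jk}^2\}$ each produce expressions of the form $2L_{ij}L_{ik}\{L_{ij},L_{ik}\}+\cdots$, and the structure constants arrange so that the two contributions cancel. This is the analogue of the matrix identity \eqref{eq:commutator:KijKjk}, and I expect it to be the one genuinely computational step; the main obstacle is purely bookkeeping — tracking the signs and the three symmetric monomials $L_{ij}L_{ik}L_{jk}$ correctly so as to confirm the cancellation. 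It is worth remarking that this parallelism is not accidental: the relations \eqref{eq:Poisson:relations} and \eqref{eq:commutator:relations} are identical, which is exactly the statement anticipated in Remark~\ref{rem:commutator-equivalence} that the two bracket conditions coincide on $S^n$.
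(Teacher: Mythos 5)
Your proposal is correct and follows essentially the same route as the paper: both rest on the closed form $K_{ij}=(x_ip_j-x_jp_i)^2$, from which the disjoint-index relation is immediate, and both reduce the overlapping relation to the single computation $\{K_{ij},K_{jk}\}=4(x_ip_j-x_jp_i)(x_jp_k-x_kp_j)(x_kp_i-x_ip_k)$. The only cosmetic difference is that the paper obtains this by differentiating directly with respect to $x_j,p_j$ and concludes by the manifest antisymmetry of the result under $i\leftrightarrow j$, whereas you organise the same cancellation through the derivation property and the $\mathfrak{so}(n+1)$ brackets of the linear functions $L_{ij}$.
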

\begin{proof}
		The function on $T^*S^n$ given by $K_{ij}$ is
	\begin{subequations}
		\begin{equation}
			\label{eq:Poisson:Kij}
			K_{ij}(x,p)=x_j^2p_i^2+x_i^2p_j^2-2x_ix_jp_ip_j=(x_ip_j-x_jp_i)^2.
		\end{equation}
		This already proves relation \eqref{eq:Poisson:relations:disjoint}.  In order to
		verify relation \eqref{eq:Poisson:relations:overlapping}, we compute
		\begin{equation}
			\label{eq:Poisson:KijKjk}
			\begin{split}
				\{K_{ij},K_{jk}\}
				&=
				\frac{\partial K_{ij}}{\partial x_j}
				\frac{\partial K_{jk}}{\partial p_j}
				-
				\frac{\partial K_{ij}}{\partial p_j}
				\frac{\partial K_{jk}}{\partial x_j}\\
				&=4
				(x_ip_j-x_jp_i)
				(x_jp_k-x_kp_j)
				(x_kp_i-x_ip_k),
			\end{split}
		\end{equation}
	\end{subequations}
	which is clearly antisymmetric with respect to $i$ and $j$.
\end{proof}
The next proposition says that there are no more relations between the
generators and their brackets, both in $\mathfrak d_{n+1}$ as well as in
$\mathscr D_{n+1}$, provided $n\ge 2$.
\begin{prop}
	\label{prop:commutator:independence}
	The generators $K_{ij}$ and the commutator brackets $[K_{ij},K_{jk}]$ with
	$1\le i<j<k\le n+1$ are linearly independent in $\mathfrak d_{n+1}$ for
	$n\ge 2$.  The same is true for $K_{ij}$ and the Poisson brackets
	$\{K_{ij},K_{jk}\}$ as elements of $\mathscr D_{n+1}$.
\end{prop}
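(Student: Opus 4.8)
The plan is to treat both assertions in parallel, exploiting that in each incarnation the generators carry a definite ``type'' which distinguishes them from the brackets. In the commutator algebra the generators are symmetric and the brackets antisymmetric endomorphisms; in the Poisson algebra the generators are even and the brackets odd in the momenta. In either case an arbitrary linear relation among generators and brackets therefore splits automatically into one relation purely among the generators and one purely among the brackets, and it remains to prove the independence of each family separately. Throughout I assume $n\ge2$, which is exactly the condition guaranteeing that triples $i<j<k$ exist (i.e.\ $n+1\ge3$) so that the bracket families are non-empty.

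For $\mathfrak d_{n+1}$ I would work with the extensions $\hat K_{ij}$ from the proof of Proposition~\ref{prop:commutator:relations}, since $K\mapsto\hat K$ is linear, injective and bracket-preserving, so that independence of the $\hat K$'s is equivalent to independence of the $K$'s. Each $\hat K_{ij}$ is a \emph{symmetric} endomorphism of $V$, whereas the commutator $[\hat K_{ij},\hat K_{jk}]$ of symmetric endomorphisms is \emph{antisymmetric}. Hence a relation $\sum_{i<j}a_{ij}\hat K_{ij}+\sum_{i<j<k}b_{ijk}[\hat K_{ij},\hat K_{jk}]=0$ forces its symmetric and antisymmetric parts to vanish separately at each $x\in S^n$, yielding $\sum a_{ij}\hat K_{ij}=0$ and $\sum b_{ijk}[\hat K_{ij},\hat K_{jk}]=0$. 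The entries of these fields are homogeneous polynomials in $x$ (of degree $2$ for the generators and degree $4$ for the brackets), and since a homogeneous polynomial vanishing on $S^n$ vanishes identically, I may simply compare coefficients of monomials. For the generators, the off-diagonal $(i,j)$-entry of $\hat K_{ij}$ is $-x_ix_j$ and no other $\hat K_{kl}$ reaches that entry, so the $(i,j)$-entry of the sum is $-a_{ij}x_ix_j$, forcing $a_{ij}=0$.

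The brackets are the delicate point, because several triples contribute to any single matrix entry. I would resolve this by isolating a distinguished monomial per triple. Fix $a<b<c$ and inspect the $(a,c)$-entry of $\sum b_{ijk}[\hat K_{ij},\hat K_{jk}]$: using \eqref{eq:commutator:KijKjk}, only triples of the form $\{a,c,m\}$ contribute to it, and a short computation in each of the cases $m<a$, $a<m<c$, $m>c$ shows that such a triple contributes, up to sign, exactly the monomial $x_ax_cx_m^2$. These monomials are pairwise distinct as $m$ ranges over the remaining indices, so all their coefficients must vanish; taking $m=b$ gives $b_{abc}=0$. I expect this isolation of a unique monomial per triple to be the main obstacle of the commutator case, and the device above is precisely what overcomes it.

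For $\mathscr D_{n+1}$ the generators \eqref{eq:Poisson:Kij} are homogeneous of degree $2$ in $p$ while the brackets \eqref{eq:Poisson:KijKjk} are homogeneous of degree $3$; since the fibrewise scaling $p\mapsto tp$ preserves $T^*S^n$, a relation again splits into $\sum a_{ij}K_{ij}=0$ and $\sum b_{ijk}\{K_{ij},K_{jk}\}=0$. Writing $L_{ij}=x_ip_j-x_jp_i$, so that $K_{ij}=L_{ij}^2$ and $\{K_{ij},K_{jk}\}=4L_{ij}L_{jk}L_{ki}$, evaluation at $(x,p)=(e_i,e_j)\in T^*S^n$ gives $K_{kl}(e_i,e_j)=1$ if $\{k,l\}=\{i,j\}$ and $0$ otherwise, whence $a_{ij}=0$ at once. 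For the brackets I would localise analogously to the commutator case, now by restricting to phase-space points with $x,p\in\operatorname{span}(e_a,e_b,e_c)$: then $L_{pq}$ vanishes unless $\{p,q\}\subset\{a,b,c\}$, so only the single triple $\{a,b,c\}$ survives and the relation reduces to $4b_{abc}L_{ab}L_{bc}L_{ca}=0$. Exhibiting one admissible point with all three factors nonzero, for instance $x\propto e_a+e_b$ and $p\propto e_a-e_b+2e_c$ (which satisfy $p\perp x$), forces $b_{abc}=0$. Thus the same underlying difficulty — a single component fed by many triples — appears in both settings and is dispatched by the two localisation devices above: the distinguished monomial $x_ax_cx_m^2$ for the commutator bracket and the three-coordinate subspace for the Poisson bracket.
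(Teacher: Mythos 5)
Your proof is correct and follows essentially the same strategy as the paper: split the relation using the symmetric/antisymmetric dichotomy (resp.\ the quadratic/cubic grading in momenta) and then localise to a single triple $\{a,b,c\}$. The only cosmetic difference is that for the commutators the paper evaluates at a point $x$ whose support is exactly $\{a,b,c\}$ (so that the prefactor $x_ix_jx_k$ kills every other bracket), whereas you compare coefficients of the distinguished monomial $x_ax_cx_m^2$ in a fixed matrix entry; both devices accomplish the same isolation.
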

\begin{proof}
	Since $K_{ij}$ are symmetric and $[K_{ij},K_{jk}]$ are
	antisymmetric, it suffices to check the linear independence of both sets
	independently.  The elements $K_{ij}$ are linearly independent by definition. To prove
	the linear independence of $[K_{ij},K_{jk}]$ suppose that
	\[
		\sum_{1\le i<j<k\le n+1}\mspace{-16mu}\lambda_{ijk}[K_{ij},K_{jk}]=0.
	\]
	For each triple $(p,q,r)$ with $1\le p<q<r\le n+1$ consider a point $x\in
	S^n$ with $x_m=0$ if and only if $m\not\in\{p,q,r\}$.  Due to
	\eqref{eq:commutator:KijKjk} we have that $[K_{ij},K_{jk}]=0$ at this
	point $x$ for all $i<j<k$ unless $(i,j,k)=(p,q,r)$.  Hence
	all $\lambda_{pqr}=0$.

	In the case of $\mathscr D_{n+1}$ we note that $K_{ij}$ are quadratic in momenta while $\{K_{ij},K_{jk}\}$
	are cubic; the rest of the proof is the same.
\end{proof}

\subsection{St\"ackel systems generated from special conformal Killing tensors}

The generic St\"ackel systems on a sphere can be constructed from special
conformal Killing tensors.
\begin{definition}
	A \dfn{special conformal Killing tensor} on a Riemannian manifold is a
	symmetric tensor $L_{\alpha\beta}$ satisfying
	\begin{align*}
		\label{eq:SCKT}
		\nabla_{\gamma}L_{\alpha\beta}&=\lambda_\alpha g_{\beta\gamma}+\lambda_\beta g_{\alpha\gamma}&
		\lambda&=\tfrac12\nabla\operatorname{tr}L.
	\end{align*}
\end{definition}
The space of special conformal Killing tensors parametrises geodesically
equivalent metrics, which are metrics having the same set of unparametrised geodesics.
Their importance in our context stems from the fact that
\[
	K:=L-(\operatorname{tr}L)g
\]
defines a Killing tensor, as one immediately checks, and the fact that every
Killing tensor of this form is contained in a St\"ackel system.  The latter
follows easily from the Nijenhuis integrability conditions applied to $K$ (see for
example \cite{Schoebel}).  Thus, in the generic case where $L$ (and hence $K$)
has pairwise different eigenvalues, it defines a system of separation
coordinates.  The corresponding St\"ackel system is spanned by the
coefficients of the polynomial
\begin{equation}
	\label{eq:Benenti}
	\operatorname{Adj}(L-\lambda\mathrm{Id})=\sum_{i=0}^{n-1}K_i\lambda^i,
\end{equation}
where $\operatorname{Adj}X$ denotes the adjugate matrix, i.\,e.\ the transpose of the
cofactor matrix of $X$ \cite{Bolsinov&Matveev}.

We can deduce the corresponding separation coordinates directly from the
special conformal Killing tensor $L$, since its eigenvalues are constant on
the corresponding coordinate hypersurfaces \cite{Crampin03a}.  This means that
the eigenvalues of $L$ can be taken as coordinate functions.  On $S^n\subset
V$ the situation is further simplified.  The reason is that under certain
conditions, which are met in this case, every special conformal Killing tensor
$L$ on $M$ is the restriction of a covariantly constant symmetric tensor $\hat
L$ on the metric cone over $M$ and vice versa (see for example \cite{MM}).
Here the metric cone over $S^n\subset V$ is nothing but $V$, so the
determination of separation coordinates on $S^n\subset V$ arising from a
special conformal Killing tensor reduces to computing the eigenvalues of the
restriction $L$ of a constant symmetric tensor $\hat L$ on $V$.

\subsection{Two extremal cases:  elliptic and polyspherical coordinates}

As a matter of example, let us consider two extremal cases.  The generic case
of orthogonal separation coordinates on the sphere consists of elliptic
coordinates and can be obtained from a special conformal Killing tensor $L$
with simple eigenvalues as described above. For $\hat L$ with (constant)
eigenvalues $\Lambda_1<\Lambda_2<\ldots<\Lambda_{n+1}$ the eigenvalues
$\lambda_1(x),\ldots,\lambda_n(x)$ of $L$ at a point
$x=(x_1,\ldots,x_{n+1})\in S^n$ are the solutions of the equation
\begin{equation}
	\label{eq:eigenvalues}
	\sum_{k=1}^{n+1}\frac{x_k^2}{\Lambda_k-\lambda}=0
	\qquad
	\lVert x\rVert^2=1
\end{equation}
which can be ordered to satisfy
\[
	\Lambda_1<\lambda_1(x)<\Lambda_2<\lambda_2(x)<\cdots<\lambda_n(x)<\Lambda_{n+1}.
\]
This is nothing else but the defining equation for the classical {\it elliptic
coordinates} on the sphere $S^n$ introduced in 1859 by C.~Neumann \cite{Neu}.
Note that shifting or multiplying the parameters
$\Lambda_1<\Lambda_2<\ldots<\Lambda_{n+1}$ by a constant results in a mere
reparametrisation of the same coordinate system.  Therefore elliptic
coordinates form an $(n-1)$-parameter family of separation coordinates on
$S^n$.

The other extreme, having no continuous parameters at all, are {\it
polyspherical coordinates} considered by Vilenkin \cite{Vil,VilK}.
Each of these coordinate systems is given in terms of Cartesian coordinates by
starting with $\boldsymbol x(\varnothing):=1$ on $S^0\subset\mathbb R^1$ and
then defining recursively $\boldsymbol z=\boldsymbol
z(\varphi_1,\ldots,\varphi_{n-1})$ on $S^{n-1}\subset\mathbb R^n$ from
$\boldsymbol x=\boldsymbol x(\varphi_1,\ldots,\varphi_{n_1-1})$ on
$S^{n_1-1}\subset\mathbb R^{n_1}$ and $\boldsymbol y=\boldsymbol
y(\varphi_{n_1},\ldots,\varphi_{n_1+n_2-2})$ on $S^{n_2-1}\subset\mathbb
R^{n_2}$ by setting
\begin{equation}
	\label{eq:polyspherical}
	\boldsymbol z=(\boldsymbol x\cos\varphi_{n-1},\boldsymbol y\sin\varphi_{n-1})
\end{equation}
for $n=n_1+n_2$.  Since this involves a choice of a splitting $n=n_1+n_2$ in
each step, polyspherical coordinates on $S^{n-1}$ are parametrised by planar
rooted binary trees with $n$ leaves.%
\footnote{When Vilenkin introduced polyspherical coordinates in \cite{Vil}, he
	used trees which are not binary.  The description with binary trees
	appeared in Vilenkin and Klimyk \cite[Chap.~10.5]{VilK} and both are
	completely equivalent.}
For example, the standard spherical coordinates correspond to the binary tree
where each right child is a leaf.

\subsection{The residual action of the isometry group}

Theorem~\ref{thm:diagonalisability} implies that for a St\"ackel system on a
sphere we can always find an isometry which takes all Killing tensors in this
St\"ackel system to Killing tensors having diagonal algebraic curvature tensors.
This means that the space of Killing tensors with diagonal algebraic curvature tensor
defines a slice for the action of the isometry group.  If we want to classify
separation coordinates up to isometries, we have to take into account that the
stabiliser of this slice in the isometry group is not trivial.

Due to the symmetries \eqref{eq:R:anti} and \eqref{eq:R:pair}, the space of
algebraic curvature tensors is a subspace of the space $S^2\Lambda^2V$ of
symmetric forms on $\Lambda^2V$.  The natural action of the isometry group
$\OG(V)$ on this space is given as follows.  Mapping an orthonormal basis
$\{e_i:\,1\le i\le n+1\}$ of $V$ to the basis $\{(e_i\wedge e_j)/\sqrt2:\,1\le
i<j\le n+1\}$ of $\Lambda^2V$ defines a map
\begin{equation}
	\label{eq:map}
	\OG(V)\to\OG(\Lambda^2V),
\end{equation}
since the latter basis is orthonormal with respect to the scalar product on
$\Lambda^2V$ induced from the one on $V$.  Via the action of $\OG(\Lambda^2V)$
on $\Lambda^2V$ this defines an action of $\OG(V)$ on $S^2\Lambda^2V$ and
hence on algebraic curvature tensors.

In general, the subgroup in $\OG(V)$ leaving the space of diagonal bilinear
forms on $V$ invariant is the subgroup of signed permutation matrices, acting
by permutations and sign changes of the chosen basis $\{e_i\}$ in $V$.  This
group is the symmetry group of the hyperoctahedron in $V$ with vertices $\pm
e_i$ and is isomorphic to the semidirect product $S_N\ltimes\mathbb Z_2^N$,
where $N=\dim V=n+1$.

The stabiliser in $\OG(V)$ of the space of diagonal algebraic curvature
tensors is now the preimage under \eqref{eq:map} of the stabiliser of diagonal
bilinear forms on $\Lambda^2V$.  Since the latter consists of permutations and
sign changes of the basis $\{e_i\wedge e_j\}$ in $\Lambda^2V$, this is just
the group of permutations and sign changes of the basis elements $e_i$,
i.\,e.\ the group described in the preceding paragraph.  Note that the normal
subgroup of sign changes, which is isomorphic to $\mathbb Z_2^N$, acts
trivially on diagonal bilinear forms.  Hence the action descends to the
quotient $(S_N\ltimes\mathbb Z_2^N)/\mathbb Z_2^N\cong S_N$.  Summarising the
above, we have:
\begin{prop}
	\label{prop:residual}
	The stabiliser in the isometry group $\OG(V)$ of the space of Killing
	tensors on $S^n\subset V$ with diagonal algebraic curvature tensor is the
	hyperoctahedral group and isomorphic to the semidirect product
	$S_N\ltimes\mathbb Z_2^N$, where $N=n+1$.  This action descends to an
	action of $S_N$ given by
	\begin{equation}
		\label{eq:groupaction}
		\sigma(K_{ij})=K_{\sigma(i)\sigma(j)},
		\qquad
		\sigma\in S_N.
	\end{equation}
\end{prop}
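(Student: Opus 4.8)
The plan is to identify the stabiliser as the preimage, under the map \eqref{eq:map}, of the stabiliser in $\OG(\Lambda^2V)$ of the diagonal bilinear forms on $\Lambda^2V$, and then to analyse how the resulting group acts on diagonal algebraic curvature tensors. Since a bilinear form on $\Lambda^2V$ is preserved (up to diagonality) precisely by signed permutations of the basis $\{e_i\wedge e_j\}$, the whole statement reduces to identifying those $g\in\OG(V)$ whose image $\Lambda^2g\in\OG(\Lambda^2V)$ permutes the basis bivectors up to sign.

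First I would settle the easy inclusion. If $g$ is a signed permutation of $\{e_i\}$, say $ge_i=\pm e_{\sigma(i)}$ for some $\sigma\in S_N$, then $\Lambda^2g$ sends $e_i\wedge e_j$ to $\pm e_{\sigma(i)}\wedge e_{\sigma(j)}$ and hence is a signed permutation of $\{e_i\wedge e_j\}$. Thus the hyperoctahedral group of signed permutations of $V$ lies in the stabiliser.

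The main obstacle is the reverse inclusion: showing that no further orthogonal transformations appear in the preimage. Here I would exploit the fact that $\Lambda^2g$ is not an arbitrary element of $\OG(\Lambda^2V)$ but satisfies $\Lambda^2g(e_i\wedge e_j)=g(e_i)\wedge g(e_j)$. Assuming $\Lambda^2g$ maps each $e_i\wedge e_j$ to $\pm e_k\wedge e_l$, the image $g(e_i)\wedge g(e_j)$ is decomposable and spans a coordinate $2$-plane $\mathrm{span}(e_k,e_l)$. Fixing an index $i$ and choosing two further distinct indices $j\ne m$ (which is where I need $N=n+1\ge3$, i.e.\ $n\ge2$), the bivectors $g(e_i)\wedge g(e_j)$ and $g(e_i)\wedge g(e_m)$ are, up to sign, \emph{distinct} basis bivectors, since $\Lambda^2g$ permutes them and $\{i,j\}\ne\{i,m\}$; hence the coordinate planes $\mathrm{span}(g(e_i),g(e_j))$ and $\mathrm{span}(g(e_i),g(e_m))$ are distinct. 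Both contain the nonzero vector $g(e_i)$, so their intersection is one-dimensional and must be spanned by the single basis vector common to the two coordinate planes; therefore $g(e_i)\in\mathbb R\,e_s$ for some $s$, and $\lVert g(e_i)\rVert=1$ forces $g(e_i)=\pm e_s$. Running this over all $i$ and using that $g$ is invertible shows that $g$ is a signed permutation, so the stabiliser is exactly the hyperoctahedral group $S_N\ltimes\mathbb Z_2^N$.

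Finally I would read off the descent. The normal subgroup $\mathbb Z_2^N$ of pure sign changes $e_i\mapsto\pm e_i$ sends each $e_i\wedge e_j$ to $\pm e_i\wedge e_j$ and therefore acts trivially on diagonal forms, whose entries $R_{ijij}$ are insensitive to these signs; so the action factors through $(S_N\ltimes\mathbb Z_2^N)/\mathbb Z_2^N\cong S_N$. A permutation $\sigma\in S_N$ sends the basis bivector $e_i\wedge e_j$ to $e_{\sigma(i)}\wedge e_{\sigma(j)}$, hence carries the diagonal algebraic curvature tensor with $R_{ijij}=1$ to the one with $R_{\sigma(i)\sigma(j)\sigma(i)\sigma(j)}=1$. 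By the $\OG(V)$-equivariance of the isomorphism \eqref{eq:correspondence}, the same transformation holds at the level of Killing tensors, giving $\sigma(K_{ij})=K_{\sigma(i)\sigma(j)}$ as in \eqref{eq:groupaction}.
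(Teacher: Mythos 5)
Your proof is correct and follows essentially the same route as the paper: identify the stabiliser as the preimage under \eqref{eq:map} of the signed permutations of the basis $\{e_i\wedge e_j\}$ of $\Lambda^2V$, observe that this preimage is the hyperoctahedral group of signed permutations of $\{e_i\}$, and note that the sign changes act trivially so the action descends to $S_N$ via \eqref{eq:groupaction}. The only difference is that you supply an explicit argument (via decomposability of $g(e_i)\wedge g(e_j)$ and intersecting coordinate $2$-planes, valid for $n\ge2$) for the reverse inclusion that the paper merely asserts, which is a welcome addition rather than a deviation.
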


\section{Gaudin subalgebras of the Kohno-Drinfeld Lie algebra and the moduli space \texorpdfstring{$\bar M_{0,n+1}$}{}.}

We describe now the result of \cite{AFV}, which plays a key role for us.
The (real version of) the {\it Kohno-Drinfeld Lie algebra} $\mathfrak t_n$ ($n=2,3,\dots$)
 is defined as the quotient of the free Lie
algebra over $\mathbb R$ with generators $t_{ij}=t_{ji}$, $i\neq j\in\{1,\dots,n\}$, by
the ideal generated by the relations
\begin{eqnarray}
\label{KDrel}
{}[t_{ij},t_{kl}]&=&0, \qquad \text{if $i,j,k,l$ are distinct,}\\
{} [t_{ij},t_{ik}+t_{jk}]&=&0, \qquad  \text{if $i,j,k$ are
  distinct.}
\end{eqnarray}
This Lie algebra appeared in Kohno's work as the holonomy Lie algebra
of the complement to the union of the diagonals $z_i=z_j$, $i<j$ in
$\mathbb C^n$ (which is also a configuration space of $n$ distinct points on the complex plane)
and in Drinfeld's work as the value space of the universal Knizhnik-Zamolodchikov connection
(see the references in \cite{AFV}).

{\it Gaudin subalgebras} of Kohno-Drinfeld Lie algebras were introduced in \cite{AFV} as
the abelian Lie subalgebras of maximal
dimension contained in the linear span $\mathfrak t^1_n$ of the
generators $t_{ij}$.
The main class of examples is
provided by Gaudin's models of integrable spin chains
\begin{equation}\label{e-0}
\mathfrak g_n(z)=\left\{\sum_{1\le i<j\le
n}\frac{b_i-b_j}{z_i-z_j}\,t_{ij},\quad b\in \mathbb R^n\right\}.
\end{equation}
Note that they are parametrised by $z\in\Sigma_n/\mathrm{Aff}$, where
\[
\Sigma_n=\mathbb R^n\smallsetminus\bigcup_{i<j}\{z\in\mathbb R^n\,|\,z_i=z_j\}
\]
is the configuration space of $n$ distinct ordered points on the real line and
$\mathrm{Aff}$ is the group of affine maps $z\mapsto az+b$, $a\neq0$, acting
diagonally on $\mathbb R^n$.  A different type of example, which came from the
representation theory of the symmetric group, is given by the {\it
Jucys--Murphy subalgebras} spanned by
\begin{equation}
	\label{eq:Jucys-Murphy}
	t_{12},\quad t_{13}+t_{23},\quad t_{14}+t_{24}+t_{34},\quad\dots
\end{equation}
(see \cite{VershikOkounkov} and references therein).

The main result of \cite{AFV} is the following.
\begin{thm}\cite{AFV}\label{t-1}
Gaudin subalgebras in $\mathfrak t_n$ form a nonsingular algebraic subvariety
of the Grassmannian $G(n-1,n(n-1)/2)$ of $(n-1)$-dimensional subspaces in
$\mathfrak t_n^1$, isomorphic to the moduli space $\bar M_{0,n+1}$ of
stable curves of genus zero with $n+1$ marked points.
\end{thm}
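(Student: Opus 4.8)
The plan is to convert the abelian condition into explicit quadratic equations on the Grassmannian, to exhibit the Gaudin families \eqref{e-0} as a dense stratum of the resulting solution variety, and then to recognise the whole variety as $\bar M_{0,n+1}$ by extending the Gaudin parametrisation over the boundary. First I would make the commutator on $\mathfrak t_n^1$ fully explicit. Writing a generic element as $x=\sum_{i<j}x_{ij}t_{ij}$, the relations \eqref{KDrel} force, for each triple $i<j<k$, the three brackets $[t_{ij},t_{ik}]$, $[t_{ij},t_{jk}]$, $[t_{ik},t_{jk}]$ to coincide up to sign, so they collapse to a single degree-two generator $g_{ijk}:=[t_{ij},t_{jk}]$; moreover the $g_{ijk}$ with $i<j<k$ are linearly independent in the degree-two part of $\mathfrak t_n$, a standard fact about the Kohno--Drinfeld algebra paralleling Proposition \ref{prop:commutator:independence}. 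Hence for $x,y\in\mathfrak t_n^1$ one obtains
\[
	[x,y]=\sum_{i<j<k}P_{ijk}(x,y)\,g_{ijk},
\]
where $P_{ijk}$ is an antisymmetric bilinear ``three--term'' form in the three coordinates $x_{ij},x_{ik},x_{jk}$ (and the analogous $y$-coordinates). Thus an abelian subspace of $\mathfrak t_n^1$ is exactly a subspace isotropic for every two-form $P_{ijk}$ simultaneously, and the locus of such subspaces of a fixed dimension is a closed, hence complete, subvariety of the Grassmannian.

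Next I would verify that the Gaudin subalgebras solve these equations and determine their dimension. A direct partial-fraction computation shows that $P_{ijk}$ vanishes on every pair $H_i,H_j$ of the spanning vectors $H_i=\sum_{j\ne i}t_{ij}/(z_i-z_j)$ — this is the classical integrability of the Gaudin model and uses nothing beyond \eqref{KDrel} — and since $\sum_iH_i=0$ is the only linear relation, $\mathfrak g_n(z)$ has dimension $n-1$, placing it in $G(n-1,n(n-1)/2)$. I would then argue that $n-1$ is the maximal dimension of a common isotropic subspace of the $P_{ijk}$, so that Gaudin subalgebras are precisely the abelian subalgebras of maximal dimension. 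The assignment $z\mapsto\mathfrak g_n(z)$ factors through $\Sigma_n/\mathrm{Aff}$, since translating the $z_i$ leaves the $H_i$ unchanged and rescaling them only rescales the whole plane, giving a morphism $\phi^\circ\colon M_{0,n+1}\to G(n-1,n(n-1)/2)$ once the $(n+1)$-st marked point is placed at infinity; injectivity then follows by recovering the cross-ratios of the $z_i$ from ratios of the coordinates of $\mathfrak g_n(z)$.

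The main obstacle is to extend $\phi^\circ$ over the compactification and to prove that it is an isomorphism onto the \emph{entire} variety of maximal abelian subalgebras, with no spurious limits or extra components. Here I would use that the solution variety is complete while $\bar M_{0,n+1}$ is smooth, irreducible and proper of the same dimension $n-2$: by the valuative criterion the Gaudin plane acquires a well-defined limit (after clearing denominators) along every degeneration in which marked points collide, and these limits are governed by the recursive ``bubbling'' of a stable curve into a tree of $\mathbb P^1$'s, matching the boundary stratification of $\bar M_{0,n+1}$. This produces a morphism $\phi\colon\bar M_{0,n+1}\to\mathcal G_n$ that is bijective on points; checking that it is an immersion on tangent spaces — or invoking normality of the smooth source together with Zariski's main theorem — upgrades the bijection to an isomorphism and transports smoothness to $\mathcal G_n$. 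I expect the genuinely delicate points to be the determination of the maximal dimension and the proof that the boundary limits of Gaudin planes exhaust all maximal abelian subalgebras, i.e.\ that the commuting variety has no additional irreducible components beyond the image of $\phi$.
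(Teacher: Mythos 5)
First, a structural point: the paper does not prove Theorem~\ref{t-1}. It is imported verbatim from \cite{AFV} and used as a black box; the paper's own contribution in this direction is only to verify (Propositions~\ref{prop:commutator:relations}--\ref{prop:commutator:independence}) that the quotients $\mathfrak d_{n}$ and $\mathscr D_{n}$ satisfy the hypotheses under which the result of \cite{AFV} transfers. So there is no internal proof to compare against, and your proposal has to be measured against the argument of \cite{AFV} itself.

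Your setup is correct and coincides with the starting point of that argument: the relations \eqref{KDrel} collapse the degree-two part of $\mathfrak t_n$ onto the independent brackets $[t_{ij},t_{jk}]$, $i<j<k$, so commutativity of a subspace of $\mathfrak t_n^1$ is one antisymmetric bilinear ``isotropy'' condition per triple, the locus of isotropic $(n-1)$-planes is closed in the Grassmannian, and the Gaudin planes \eqref{e-0} give an $(n-2)$-parameter family of solutions. But everything after that --- which is the actual content of the theorem --- is deferred rather than proved. Concretely: (i) you assert, without argument, that $n-1$ is the maximal dimension of a common isotropic subspace; (ii) you give no proof that every maximal abelian subspace lies in the closure of the Gaudin family, i.e.\ that the commuting locus has no extra irreducible components --- you flag this as ``genuinely delicate,'' but it is the heart of the theorem, and in \cite{AFV} it is handled not by a degeneration/properness argument but by direct classification (one first shows every such subspace contains the central element $\sum_{i<j}t_{ij}$, then studies its projections to the subalgebras $\mathfrak t_S$ for small subsets $S$ and reconstructs it, comparing with the embedding of $\bar M_{0,n+1}$ into a product of $\mathbb P^1$'s by cross-ratios); (iii) the extension of $\phi^\circ$ over all of $\bar M_{0,n+1}$ does not follow from the valuative criterion alone --- properness of the target only defines the rational map outside codimension two, so one needs the explicit boundary limits to define $\phi$ everywhere; and (iv) the fallback ``normality of the smooth source together with Zariski's main theorem'' is not sound: ZMT upgrades a bijective morphism to an isomorphism when the \emph{target} is normal, which is not known a priori for the commuting locus (a normalisation map is bijective without being an isomorphism), so the tangent-space computation --- which is also what establishes nonsingularity --- cannot be avoided. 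In short, the proposal correctly isolates the easy half and sketches a plausible strategy, but the three essential steps remain unproved.
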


In fact, the result holds for any quotient of $\mathfrak t_n$ where both the generators $t_{ij}$, $1\le i<j\le n$, and
the brackets $[t_{ij},t_{jk}]$, $1\le i<j<k\le n$, are linearly independent (see remark 2.6 in \cite{AFV}), and over any field.

The most popular version of the moduli space $\bar M_{0,n+1}$ -- appearing, for
example, in the celebrated Witten's conjecture -- is defined over $\mathbb C$.
It is a particular (Deligne-Mumford) compactification of the configuration
space $M_{0,n+1}(\mathbb C)$ of $n+1$ distinct labelled points in ${\mathbb
C}P^1$ modulo $PGL_2(\mathbb C)$ studied by Knudsen \cite{Knudsen}, who proved
that it is a smooth projective variety. The compactification $\bar
M_{0,n+1}(\mathbb C)$ includes the singular rational curves with double point
singularities and with the following properties: the graph of components is a
tree (genus zero) and each irreducible component contains at least three
marked or singular points (stability condition).

However, we need the real version $\bar M_{0,n+1}(\mathbb R)$, which we discuss next.

\section{The real version \texorpdfstring{$\bar M_{0,n+1}(\mathbb R)$}{} and Stasheff polytopes}

\subsection{Topology}

The real version $\bar M_{0,n+1}(\mathbb R)$ was studied in more detail by Kapranov \cite{Kap} and Devadoss \cite{Devadoss}.
By Knudsen's theorem, which works over $\mathbb R$ as well, $\bar M_{0,n+1}(\mathbb R)$ is a smooth real manifold of dimension $n-2$. It can be described as an iterated blow-up of $\mathbb RP^{n-2}$ \cite{Kap, Devadoss, DJS}.
$\bar M_{0,4}(\mathbb R)$ is simply $\mathbb RP^{1}$ and $\bar M_{0,5}(\mathbb R)$ is a non-orientable surface with Euler characteristic $-3$, which is a connected sum of five copies of $\mathbb RP^{2}$.

The topology of $\bar M_{0,n+1}(\mathbb R)$ becomes increasingly complicated when $n$ grows. It is known to be aspherical (Davis et al.\ \cite{DJS}). The Euler characteristic can be given explicitly by
$$
\chi\bigl(\bar M_{0,n+1}(\mathbb R)\bigr)=(-1)^{\frac{n-2}{2}}(n-1)!!(n-3)!!
$$
for even $n$ (and zero for odd $n$), see \cite{Devadoss}. A description of the cohomology is more complicated than in the complex case, as found by Etingof et al.\ in \cite{ER}.

\subsection{Combinatorics}

Fortunately, a lot of information about $\bar M_{0,n+1}(\mathbb R)$ is encapsulated in a well studied remarkable polytope known as {\it associahedron}, or {\it Stasheff polytope} $K_{n}$. Namely, $\bar M_{0,n+1}(\mathbb R)$ is tessellated by $n!/2$ copies of $K_n$, see \cite{Kap, Devadoss}.

$K_n$ was first described by Stasheff as a combinatorial object in the homotopy theory of $H$-spaces \cite{Stas} (see the history of this in Stasheff's contribution to \cite{MPS}). Its first realisation as a convex polytope is usually ascribed to Milnor. By now we have several geometric realisations of Stasheff polytopes, see e.g.\ \cite{Dev2} and references therein. $K_n$ is a convex polytope of dimension $n-2$:
$K_3$ is a segment, $K_4$ is a pentagon and $K_5$ is the polyhedron shown in Figure~\ref{stas}, which can be obtained combinatorially by cutting off three skew edges from a cube.

\begin{figure}[h]
	\subfigure[$K_3$]{\includegraphics[width=.2\textwidth]{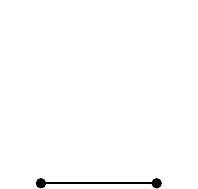}}\hfill
	\subfigure[$K_4$]{\includegraphics[width=.2\textwidth]{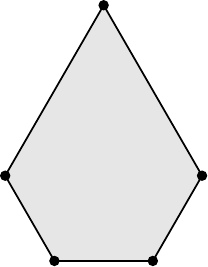}}\hfill
	\subfigure[$K_5$]{\includegraphics[width=.4\textwidth]{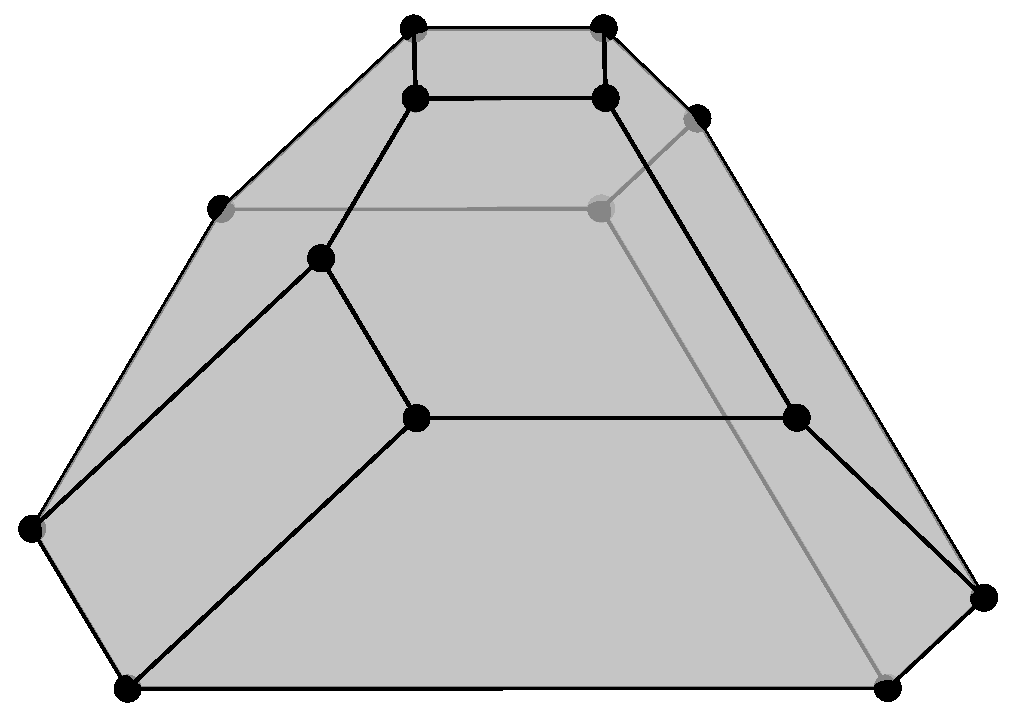}}
	\caption{Stasheff polytopes.}
	\label{stas}
\end{figure}

The faces of $K_n$ of codimension $d$ are in one-to-one correspondence with
dissections of a based $(n+1)$-gon by $d$ non-intersecting diagonals (see
e.\,g.\ \cite{DR}). In particular, the vertices of $K_n$ correspond to the
triangulations of the $(n+1)$-gon by non-intersecting diagonals and their
number is $C_{n-1}$, where
$$
C_n=\frac{1}{n+1}{{2n}\choose{n}}
$$
is the {\it Catalan number}.

Alternatively, the faces of $K_n$ can be labelled by non-isomorphic planar
rooted trees with $n$ leaves (see e.\,g.\ \cite{Devadoss}).  These are simply
the dual graphs of the dissected polygons, cut off at the edges of the polygon.  In particular, the vertices of
$K_n$ correspond to binary rooted trees.  For $n=4$ this is depicted in
Figure~\ref{fig:combinatorics}.

\begin{figure}[h]
	\begin{center}
		\includegraphics[width=.9\textwidth]{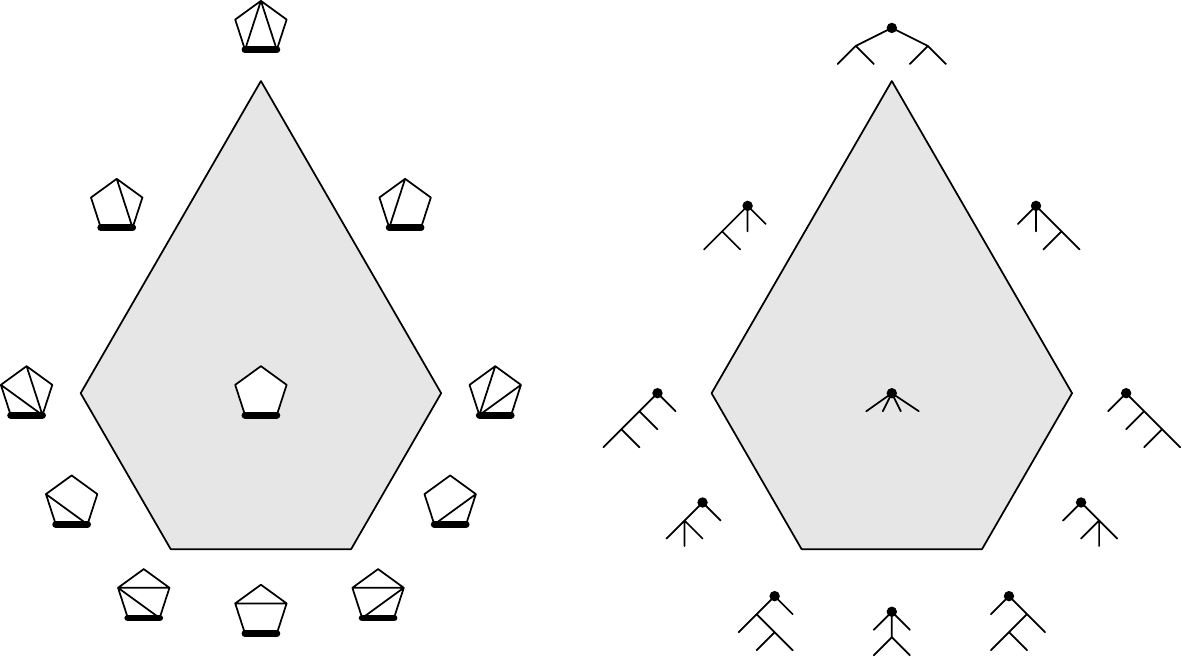}
		\caption{%
			Labellings of $K_4$ by dissections of a based pentagon (left) and
			planar rooted trees with four leaves (right).
		}
		\label{fig:combinatorics}
	\end{center}
\end{figure}

The Stasheff polytope $K_n$ admits a realisation with the dihedral symmetry
$D_{n+1}$, which is the symmetry group of a regular $(n+1)$-gon \cite{Lee}.

\subsection{Operad structure}

The sequence of moduli spaces $\bar M_{0,n+1}(\mathbb R)$ for $n=1,2,\ldots$ carries a natural
operad structure, called the ``mosaic operad'' \cite{Devadoss}.

\begin{definition}
	An \dfn{operad structure} on a sequence of objects $\mathcal O(n)$ is a
	composition map
	\[
		\begin{array}{rrcl}
			\circ:&\mathcal O(k)\times\mathcal O(n_1)\times\cdots\times\mathcal O(n_k)&\longrightarrow&\mathcal O(n_1+\cdots+n_k)\\
			&(y,x_1,\ldots,x_k)&\mapsto&y\circ(x_1,\ldots,x_k)
		\end{array}
	\]
	together with a right action
	\[
		\begin{array}{rrcl}
			\star:&\mathcal O(n)\times S_n&\longrightarrow&\mathcal O(n)\\
			&(x,\pi)&\mapsto&x\star\pi
		\end{array}
	\]
	of the permutation group $S_n$ on each object $\mathcal O(n)$, satisfying
	the following axioms:
	\begin{description}
		\item[Identity]
			There is a distinguished element $1\in\mathcal O(1)$ satisfying
			\[
				y\circ(1,\ldots,1)=y=1\circ y.
			\]
		\item[Associativity]
			\[
				z\circ(y_1\circ x_1,\ldots,y_k\circ x_k)=\bigl(z\circ(y_1,\ldots,y_k)\bigr)\circ(x_1,\ldots,x_k),
			\]
		\item[Equivariance]
			\begin{align*}
				(y\star\pi)\circ(x_1,\ldots,x_k)&=y\circ\bigl((x_1,\ldots,x_k)\star\pi\bigr)\\
				y\circ(x_1\star\pi_1,\ldots,x_k\star\pi_k)&=\bigl(y\circ(x_1,\ldots,x_k)\bigr)\star(\pi_1,\ldots,\pi_k),
			\end{align*}
			where $S_k$ acts on $(x_1,\ldots,x_k)$ by permutation and
			$(\pi_1,\ldots,\pi_k)$ on $\mathcal O(n_1+\cdots+n_k)$ under the
			inclusion $S_{n_1}\times\cdots\times S_{n_k}\hookrightarrow
			S_{n_1+\cdots+n_k}$.
	\end{description}
\end{definition}
In terms of dissected polygons, the operad structure on $\mathcal O(n):=\bar
M_{0,n+1}(\mathbb R)$ is given by gluing the $k$ $(n_i+1)$-gons $x_i$ with
their base to the $k$ non-base edges of the $(k+1)$-gon $y$ to form the
$(n_1+\ldots+n_k+1)$-gon $y\circ(x_1,\ldots,x_k)$ with the base of $y$ as
base.  If $y$ is dissected by $d_0$ diagonals and $x_i$ by $d_i$ diagonals,
then $y\circ(x_1,\ldots,x_k)$ is dissected by $d_0+d_1+\cdots d_k+k$
diagonals, namely the diagonals of $y$ and $x_1,\ldots,x_k$ plus the $k$ glued
pairs of edges which become diagonals after gluing.  On planar trees, the
composition $y\circ(x_1,\ldots,x_k)$ is given by grafting the $k$ trees $x_i$
with their root to the leaves of the tree $y$.

The operad structure on Stasheff polytopes defines a map
\[
	\circ:K_k\times K_{n_1}\times\cdots\times K_{n_k}\hookrightarrow K_{n_1+\cdots+n_k}
\]
whose image is a codimension $k$ face of $K_{n_1+\cdots+n_k}$.  This yields a
decomposition of the faces of a Stasheff polytope into products of Stasheff
polytopes \cite{Stas}.

\section{The correspondence}
\label{sec:correspondence}

After these preparations we are now in a position to state our main result.  By Propositions
\ref{prop:commutator:relations} and \ref{prop:Poisson:relations} the defining
relations of the Kohno-Drinfeld Lie algebra $\mathfrak t_n$ are satisfied in
the Lie algebras $\mathfrak d_n$ and $\mathscr D_n$ (c.\,f.\ Definition
\ref{def:d_n}).  This provides surjective Lie algebra morphisms
\begin{align}
	\label{eq:morphisms}
	\mathfrak t_n&\longrightarrow\mathfrak d_n&
	\mathfrak t_n&\longrightarrow\mathscr D_n,
\end{align}
given by mapping the generator $t_{ij}$ to the Killing tensor $K_{ij}$.  Under
these morphisms the linear span $\mathfrak t_n^1$ of the $t_{ij}$ is
isomorphic to the space of Killing tensors -- interpreted as endomorphisms in
$\mathfrak d_n$ respectively as quadratic functions on the cotangent bundle in
$\mathscr D_n$.  Thus the above morphisms map Gaudin subalgebras in the
Kohno-Drinfeld Lie algebra $\mathfrak t_n$ to St\"ackel systems on $S^{n-1}$
with diagonal algebraic curvature tensor.
Proposition~\ref{prop:commutator:independence} now shows that this defines an
isomorphism between Gaudin subalgebras and St\"ackel systems.  Now using
Theorem~\ref{t-1} we have the following correspondence:

\begin{thm}
	\label{thm:iso}
	The St\"ackel systems on $S^n$ with diagonal algebraic curvature tensor
	form a nonsingular algebraic subvariety of the Grassmannian
	$G(n,n(n+1)/2)$ of $n$-planes in the space of Killing tensors with
	diagonal algebraic curvature tensor, which is isomorphic to the real
	Deligne-Mumford-Knudsen moduli space $\bar M_{0,n+2}(\mathbb R)$ of stable
	genus zero curves with $n+2$ marked points.
\end{thm}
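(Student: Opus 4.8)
The plan is to transport Theorem~\ref{t-1} across the morphisms \eqref{eq:morphisms} and check that these morphisms are tight enough to be isomorphisms on the relevant objects. First I would observe that, by Propositions~\ref{prop:commutator:relations} and \ref{prop:Poisson:relations}, the generators $K_{ij}$ satisfy exactly the defining relations \eqref{KDrel} of the Kohno-Drinfeld Lie algebra $\mathfrak t_{n+1}$, so that sending $t_{ij}\mapsto K_{ij}$ extends to well-defined surjective Lie algebra morphisms $\mathfrak t_{n+1}\to\mathfrak d_{n+1}$ and $\mathfrak t_{n+1}\to\mathscr D_{n+1}$. Restricting to the linear span $\mathfrak t_{n+1}^1$ of the generators, these morphisms identify $\mathfrak t_{n+1}^1$ with the space of Killing tensors with diagonal algebraic curvature tensor (interpreted as endomorphisms, respectively as quadratic functions on $T^*S^n$), since the $K_{ij}$ are linearly independent by construction.

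The crux is to apply the sharpened form of Theorem~\ref{t-1} noted just after its statement: the identification of Gaudin subalgebras with $\bar M_{0,n+2}(\mathbb R)$ holds for \emph{any} quotient of $\mathfrak t_{n+1}$ in which both the generators $t_{ij}$ and the brackets $[t_{ij},t_{jk}]$ remain linearly independent. I would therefore invoke Proposition~\ref{prop:commutator:independence}, which asserts precisely this linear independence in $\mathfrak d_{n+1}$ (for the $K_{ij}$ and the commutators $[K_{ij},K_{jk}]$) and the analogous statement in $\mathscr D_{n+1}$ (where the $K_{ij}$ are quadratic and the Poisson brackets $\{K_{ij},K_{jk}\}$ are cubic in the momenta). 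This verifies the hypotheses of the sharpened theorem for both quotients $\mathfrak d_{n+1}$ and $\mathscr D_{n+1}$, so that the variety of Gaudin subalgebras in either quotient is again isomorphic to $\bar M_{0,n+2}(\mathbb R)$.

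It remains to match Gaudin subalgebras with St\"ackel systems. A Gaudin subalgebra is an abelian subalgebra of maximal dimension contained in $\mathfrak t_{n+1}^1$; under the morphisms it maps to an $n$-dimensional space of Killing tensors with diagonal algebraic curvature tensor that commutes with respect to both brackets simultaneously. I would argue that commutativity in $\mathfrak d_{n+1}$ is the condition \eqref{it:commutator} and commutativity in $\mathscr D_{n+1}$ is the condition \eqref{it:Poisson} of Definition~\ref{def:Staeckel}, and that by Remark~\ref{rem:commutator-equivalence} the two conditions coincide on $S^n$; hence an abelian subspace in one quotient is abelian in the other, and the two isomorphic copies of $\bar M_{0,n+2}(\mathbb R)$ coincide under the natural identification of $\mathfrak t_{n+1}^1$ with the space of diagonal Killing tensors. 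Thus Gaudin subalgebras correspond bijectively and algebraically to $n$-dimensional abelian spaces of diagonal Killing tensors, i.e.\ to St\"ackel systems with diagonal algebraic curvature tensor sitting inside the Grassmannian $G(n,n(n+1)/2)$.

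The main obstacle I anticipate is precisely this last identification of Gaudin subalgebras with St\"ackel systems: one must confirm that the maximal abelian dimension in $\mathfrak t_{n+1}^1$ is exactly $n$ and that maximality together with commutativity reproduces the full defining data of a St\"ackel system, rather than merely a Poisson-commuting family. This is where the equivalence asserted in Remark~\ref{rem:commutator-equivalence} does the essential work, ensuring that the single isomorphism type $\bar M_{0,n+2}(\mathbb R)$ governs both the commutator and the Poisson pictures at once; the algebraic-geometric claims (nonsingularity, the embedding in the Grassmannian, and the isomorphism itself) then follow formally by pushing Theorem~\ref{t-1} through the identification, since the morphisms are linear isomorphisms on $\mathfrak t_{n+1}^1$ and preserve the Pl\"ucker data defining the subvariety of abelian subspaces.
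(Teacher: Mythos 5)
Your proposal follows essentially the same route as the paper: establish the surjective morphisms $\mathfrak t_{n+1}\to\mathfrak d_{n+1}$ and $\mathfrak t_{n+1}\to\mathscr D_{n+1}$ via Propositions~\ref{prop:commutator:relations} and \ref{prop:Poisson:relations}, use Proposition~\ref{prop:commutator:independence} to verify the linear-independence hypotheses of the strengthened form of Theorem~\ref{t-1} (the remark that the result holds for any quotient of $\mathfrak t_{n+1}$ in which the generators and the brackets $[t_{ij},t_{jk}]$ stay independent), and identify Gaudin subalgebras with St\"ackel systems. The proof is correct and matches the paper's argument, including the role of Remark~\ref{rem:commutator-equivalence} in reconciling the commutator and Poisson conditions.
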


Note that since $\bar M_{0,n+2}(\mathbb R)$ can be considered as a
compactification of the configuration space $\Sigma_{n+1}/\mathrm{Aff}$ of
$n+1$ ordered distinct points on a real line modulo the affine group, we have
a natural action of the symmetric group $S_{n+1}$ on $\bar M_{0,n+2}(\mathbb
R)$.

\begin{cor}
	The space $X_n$ of equivalence classes of orthogonal separation
	coordinates on the sphere $S^n$ modulo the orthogonal group $O(n+1)$ is
	naturally homeomorphic to the quotient space $Y_n=\bar M_{0,n+2}(\mathbb
	R)/S_{n+1}$.
\end{cor}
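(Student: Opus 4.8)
The plan is to realise $X_n$ as the quotient of the diagonal slice by its residual symmetry and then invoke Theorem~\ref{thm:iso}. By Theorem~\ref{thm:orthogonal} every separation coordinate system on $S^n$ is equivalent to an orthogonal one, so $X_n$ is already the set of orthogonal separation coordinates modulo $\OG(n+1)$. By Theorem~\ref{thm:correspondence} these are in bijection with St\"ackel systems, and this bijection is $\OG(n+1)$-equivariant, since constructing the coordinate hypersurfaces from a St\"ackel system (as orthogonal complements of the eigendirections of a generic element) is a geometric procedure that commutes with isometries. Hence $X_n$ is the set of St\"ackel systems on $S^n$ modulo $\OG(n+1)$.

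Next I would pass to the diagonal slice. Let $\mathcal S$ denote the variety of St\"ackel systems with diagonal algebraic curvature tensor with respect to a fixed orthonormal basis $\{e_i\}$ of $V$; by Theorem~\ref{thm:iso} it is isomorphic, as an algebraic variety and hence as a topological space, to $\bar M_{0,n+2}(\mathbb R)$. Theorem~\ref{thm:diagonalisability} guarantees that every St\"ackel system can be diagonalised by a suitable isometry, so $\mathcal S$ meets every $\OG(n+1)$-orbit, giving a surjection $\mathcal S\twoheadrightarrow X_n$. To identify its fibres I would use Proposition~\ref{prop:residual}: the subgroup of $\OG(n+1)$ preserving $\mathcal S$ is the hyperoctahedral group, whose normal subgroup $\mathbb Z_2^{n+1}$ of sign changes acts trivially, so that the residual action on $\mathcal S$ is exactly the $S_{n+1}$-action \eqref{eq:groupaction}. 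Thus $S_{n+1}$-equivalent points of $\mathcal S$ define the same point of $X_n$, and the surjection factors through a continuous map $\mathcal S/S_{n+1}\to X_n$.

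The main obstacle is the injectivity of this last map: I must show that if some $g\in\OG(n+1)$ carries a diagonal St\"ackel system $S_1$ to another diagonal system $S_2$, then already some $\sigma\in S_{n+1}$ does so. The point is that $S_2$ is then diagonal both in the basis $\{e_i\}$ and in the basis $\{g\,e_i\}$, so it suffices to prove that the orthonormal basis diagonalising a St\"ackel system is unique up to the hyperoctahedral group. For a generic (elliptic) system this basis is the eigenbasis of the underlying special conformal Killing tensor $\hat L$, determined up to signs and permutation of the $e_i$, which is precisely the hyperoctahedral ambiguity. I would then extend this rigidity from the dense elliptic locus to all of $\mathcal S$, either by a continuity argument or stratum by stratum along the boundary of $\bar M_{0,n+2}(\mathbb R)$ using the operad decomposition of its faces; this degenerate-stratum analysis is where the real work lies, since there the stabiliser may exceed $\mathbb Z_2^{n+1}$ and the uniqueness statement must be checked directly.

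Granting injectivity, it remains to upgrade the continuous bijection $\mathcal S/S_{n+1}\to X_n$ to a homeomorphism. Here I would note that $\mathcal S\cong\bar M_{0,n+2}(\mathbb R)$ is compact and $S_{n+1}$ is finite, so $\mathcal S/S_{n+1}$ is compact, while $X_n$ is Hausdorff, being the quotient of the compact subvariety of St\"ackel systems in the Grassmannian by the compact group $\OG(n+1)$. A continuous bijection from a compact space to a Hausdorff space is automatically a homeomorphism. Composing with the $S_{n+1}$-equivariant isomorphism of Theorem~\ref{thm:iso} yields $X_n\cong\bar M_{0,n+2}(\mathbb R)/S_{n+1}=Y_n$, as claimed.
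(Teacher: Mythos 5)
Your overall route is the same as the paper's: pass to the slice $\mathcal S$ of diagonal St\"ackel systems, identify it with $\bar M_{0,n+2}(\mathbb R)$ via Theorem~\ref{thm:iso}, and quotient by the residual $S_{n+1}$-action of Proposition~\ref{prop:residual}. The paper's own proof is in fact terser than yours: it only records the $S_{n+1}$-equivariance of the morphisms \eqref{eq:morphisms} and then declares the quotients homeomorphic. You have correctly isolated the one genuinely nontrivial point that this glosses over, namely the injectivity of $\mathcal S/S_{n+1}\to X_n$ (equivalently, that the intersection of an $\OG(n+1)$-orbit with the diagonal slice is a single hyperoctahedral orbit). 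Your treatment of the generic stratum via the eigenbasis of $\hat L$ is right, and the compactness argument at the end, which the paper omits entirely, is a clean way to upgrade the bijection to a homeomorphism.

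That said, the gap you flag is real and remains open in your write-up, and you should state the claim to be proved more carefully. As phrased, ``the orthonormal basis diagonalising a St\"ackel system is unique up to the hyperoctahedral group'' is literally false on the degenerate strata: for spherical coordinates on $S^2$ the system spanned by $g$ and $K_{12}=(x_1p_2-x_2p_1)^2$ is diagonal in \emph{every} basis obtained by rotating $e_1,e_2$ within their plane, since $K_{12}$ is the square of the angular momentum about $e_3$. What one actually needs is weaker: if $g\cdot S_1=S_2$ with $S_1,S_2\in\mathcal S$, then the coset $g\cdot\mathrm{Stab}_{\OG(n+1)}(S_1)$ meets the hyperoctahedral group, i.e.\ the diagonal systems within one $\OG(n+1)$-orbit form a single $S_{n+1}$-orbit (one can check this by hand in the spherical example: $gK_{12}g^{-1}=L_{ge_3}^2$ is diagonal only when $ge_3$ is a coordinate axis). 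A naive continuity argument from the elliptic locus does not obviously work, because $g\cdot S_1^{(t)}$ need not stay diagonal along the approximating family; the stratum-by-stratum analysis via the operadic decomposition of the boundary faces, which you defer, really is where the work lies. To be fair, the paper's proof is silent on this point as well, so your proposal is, if anything, a more honest account of what has and has not been established.
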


\begin{proof}
	This follows directly from Theorems~\ref{thm:correspondence} and \ref{thm:iso}.
	It suffices to note that by Proposition~\ref{prop:residual} the morphisms
	\eqref{eq:morphisms} are equivariant with respect to the $S_{n+1}$-action
	on $\mathfrak t_{n+1}$ and $\mathfrak d_{n+1}$ respectively $\mathscr
	D_{n+1}$.  Therefore the isomorphism in Theorem~\ref{thm:iso} is
	$S_{n+1}$-equiviariant, so that the corresponding quotients are
	homeomorphic. Note that while the space of St\"ackel systems on
	$S^n\subset V$ with diagonal algebraic curvature tensor depends on the
	choice of an orthonormal basis in the ambient space $V$ (for which the
	algebraic curvature tensors are diagonal), the quotient does not.
	Therefore the homeomorphism is natural.
\end{proof}

Since $\bar M_{0,n+2}(\mathbb R)$ is tessellated by $(n+1)!/2$ copies of the
Stasheff polytope $K_{n+1}$, we can use it to describe the quotient.  The
interior of $K_{n+1}$ corresponds to the classical elliptic coordinates
\eqref{eq:eigenvalues} on the sphere $S^n$.  The $n+1$ distinct real
parameters $(\Lambda_1,\dots,\Lambda_{n+1})\in\Sigma_{n+1}$ they depend on are
the eigenvalues of the symmetric tensor $\hat L$ on $\mathbb R^{n+1}$ which
restricts to the corresponding special conformal Killing tensor $L$ on $S^n$.
Shifting or scaling them only reparametrises the corresponding coordinates.
Hence the actual parameter space is the quotient $\Sigma_{n+1}/\mathrm{Aff}$,
which is nothing else but the open moduli space $M_{0,n+2}(\mathbb R)$. Thus
we have the following important

\begin{cor}
	\label{cor:degenerations}
	All orthogonal separation coordinates on $S^n$ belong to the closure of
	the Neumann family of elliptic coordinates.  The possible degenerations of
	the Neumann family correspond to the faces of the Stasheff polytope
	$K_{n+1}$.
\end{cor}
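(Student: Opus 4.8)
The plan is to read off both assertions from Theorem~\ref{thm:iso} and the Stasheff tessellation of $\bar M_{0,n+2}(\mathbb R)$, the only genuine work being the identification of the open stratum with the Neumann family. First I would pin down this open stratum. The configuration space $M_{0,n+2}(\mathbb R)=\Sigma_{n+1}/\mathrm{Aff}$ is an open dense subset of its Deligne-Mumford-Knudsen compactification $\bar M_{0,n+2}(\mathbb R)$, its complement being the union of the boundary divisors. By the discussion preceding the corollary, the points of this open stratum correspond under Theorem~\ref{thm:iso} to exactly those St\"ackel systems generated via \eqref{eq:Benenti} by a special conformal Killing tensor with pairwise distinct eigenvalues, that is, to the classical elliptic coordinates \eqref{eq:eigenvalues}, parametrised by $(\Lambda_1,\dots,\Lambda_{n+1})\in\Sigma_{n+1}$ modulo shifts and scalings.

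Granting this identification, the first assertion follows at once from density. Since $M_{0,n+2}(\mathbb R)$ is dense in $\bar M_{0,n+2}(\mathbb R)$ and the isomorphism of Theorem~\ref{thm:iso} is in particular a homeomorphism of the underlying spaces, the elliptic locus is dense in the variety of all St\"ackel systems with diagonal algebraic curvature tensor. Transporting through the bijection of Theorem~\ref{thm:correspondence} between St\"ackel systems and orthogonal separation coordinates, every orthogonal separation coordinate system on $S^n$ is a limit of elliptic ones, i.e.\ lies in the closure of the Neumann family.

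For the second assertion I would localise to a single tile. The $(n+1)!/2$ connected components of $M_{0,n+2}(\mathbb R)=\Sigma_{n+1}/\mathrm{Aff}$ --- indexed by the orderings of the eigenvalues up to the order-reversing part of $\mathrm{Aff}$ --- have closures equal to the $(n+1)!/2$ copies of $K_{n+1}$ that tessellate $\bar M_{0,n+2}(\mathbb R)$. Fixing one copy amounts to fixing an ordering $\Lambda_1<\dots<\Lambda_{n+1}$, and by the $S_{n+1}$-equivariance recorded in Proposition~\ref{prop:residual} every degeneration arises, up to relabelling, inside this one polytope. The interior of $K_{n+1}$ then parametrises the generic non-degenerate elliptic system, while the degenerations are the boundary points and are stratified exactly by the proper faces of $K_{n+1}$: a face of codimension $d$ is a dissection of the based $(n+2)$-gon by $d$ diagonals, equivalently a planar rooted tree with $n+1$ leaves, which encodes the combinatorial type of the limiting stable curve and hence of the degenerate coordinate system.

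The step demanding the most care is the first one, namely verifying that the locus of St\"ackel systems admitting a simple-spectrum special conformal Killing tensor is precisely the open dense stratum $M_{0,n+2}(\mathbb R)$ under the isomorphism, neither larger nor smaller. Once this is secured, the remainder is the formal transport of the known face structure of the Stasheff tessellation (Kapranov, Devadoss) through the variety isomorphism of Theorem~\ref{thm:iso} and the coordinate bijection of Theorem~\ref{thm:correspondence}, both of which are already in hand.
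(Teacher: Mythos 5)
Your proposal is correct and follows essentially the same route as the paper, which derives the corollary from the paragraph immediately preceding it: the identification of the open stratum $M_{0,n+2}(\mathbb R)=\Sigma_{n+1}/\mathrm{Aff}$ with the parameter space of elliptic coordinates (via the eigenvalues of $\hat L$ modulo shifts and scalings), combined with Theorem~\ref{thm:iso}, density of the open stratum, and the Kapranov--Devadoss tessellation by copies of $K_{n+1}$. The step you single out as needing care is exactly the content of that preceding discussion, so nothing is missing.
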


The first part is probably not surprising for the experts (see the similar
claim in the complex case in \cite{KMR}), but we have not seen it explicitly
stated and proved in the literature.  In Section~\ref{sec:operad} we will show
that rather than by actually performing the limiting process explicitly (as
in \cite{KMR}), the limiting cases can be better understood by composing
generic separation coordinates (that is elliptic coordinates) of lower
dimensions under the operad composition.  The same holds true for the
corresponding St\"ackel systems.

Because we have $(n+1)!/2$ Stasheff polytopes $K_{n+1}$ tiling $\bar
M_{0,n+2}(\mathbb R)$, the quotient $Y_n=\bar M_{0,n+2}(\mathbb R)/S_{n+1}$ is
actually only ``a half'' of $K_{n+1}$ with some identification between the
faces.  In the interior of the polytope the identification is given by the
action of $\mathbb Z_2 \subset D_{n+2}$, corresponding to a reflection in the
dihedral group, realised as an isometry of $K_{n+1}$ (see above). If we are
using the blow-up description of $\bar M_{0,n+2}(\mathbb R)$ \cite{Kap, Devadoss, DJS}, then it
corresponds to the longest element $(1,2,\dots, n, n+1) \mapsto (n+1, n,
\dots, 2, 1)$ in the symmetric group $S_{n+1}$, mapping the $A_n$ Weyl chamber
into the opposite one.  For $K_4$ this is just a reflection symmetry of the
pentagon as indicated in Figure~\ref{fig:coordinates}.

The identification of the faces is more sophisticated. Probably the best way
to describe them is using the ``twisting along the diagonal'' operation for
the dissected $(n+1)$-gon introduced in \cite{Devadoss, DR}.  On planar trees
this corresponds to reversing the ordering of a node's children.  Trees which
are equivalent under this operation are called ``dyslexic''.

As a matter of illustration, let us consider the least non-trivial example $n=2$, depicted in
Figure~\ref{fig:S2}.
\begin{figure}[h]
	\begin{center}
		\includegraphics[width=.75\textwidth]{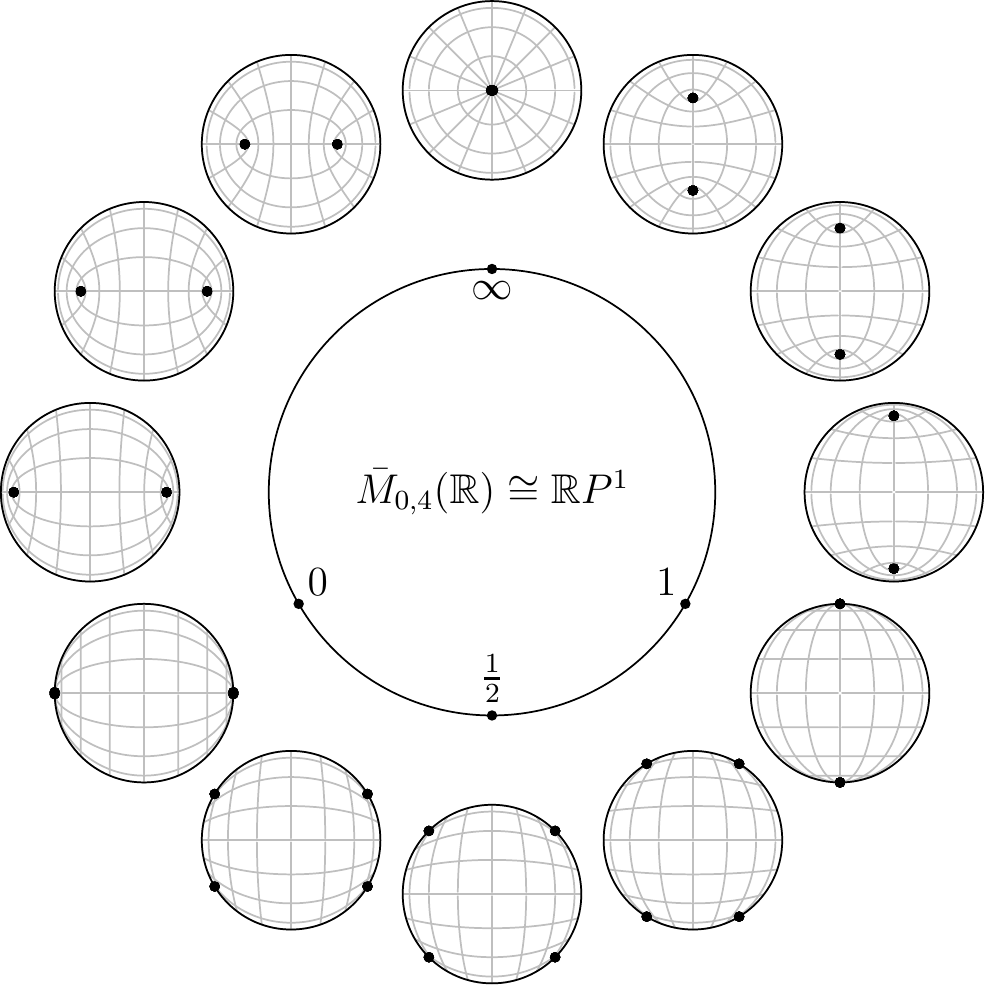}
		\caption{%
			Orthogonal separation coordinates on $S^2$ parametrised by $\bar
			M_{0,4}(\mathbb R)\cong\mathbb RP^1$.
		}
		\label{fig:S2}
	\end{center}
\end{figure}
The parameter space $\bar M_{0,4}(\mathbb R)\cong\mathbb RP^1$ is just a
circle, parametrised by the affine-invariant cross ratio
$\tau=\frac{\Lambda_2-\Lambda_1}{\Lambda_3-\Lambda_1}$.  It is tessellated by
three copies of the Stasheff polytope $K_3$, the tiles being the arcs between
the points $\tau=0,1,\infty$.  We can identify $K_3$ with the arc $[0,1]$, for
which $\Lambda_1<\Lambda_2<\Lambda_3$.  The symmetry group $S_3$ acts by
permuting $\Lambda_1$, $\Lambda_2$ and $\Lambda_3$ and hence the points $0$,
$1$ and $\infty$.  The quotient $Y_2=\bar M_{0,4}(\mathbb R)/S_3$ can be
identified with the arc $[0,\frac12]$.  This is ``a half'' of $K_3$, where
$\tau=0$ corresponds to spherical coordinates and $\tau=1/2$ to the
``lemniscatic'' case, which is just a particular case of elliptic coordinates
when $\Lambda_2=(\Lambda_1+\Lambda_3)/2$.  In this example there is no
identification in place, because the dimension is too low.

\section{Applications}

\subsection{Enumerating separation coordinates}

For a Stasheff polytope, the number of non-equivalent faces of a given
dimension can be given by the following Devadoss-Read formula \cite{DR}.  Let
$A(x,y)=\sum a_{mn}x^my^n$ be the formal series solution of the functional
equation
\begin{equation}\label{dr}
A(x,y)=y+\frac{1}{2}\left(\frac{A(x,y)^2}{1-A(x,y)}+\frac{(1+A(x,y))A(x^2,y^2)}{1-A(x^2,y^2)}\right).
\end{equation}
There is no closed formula for $A(x,y)$, but one uses Equation \eqref{dr} to find the coefficients $a_{mn}$ recursively.
The claim is that the coefficient $a_{mn}$ is the number of non-equivalent faces of $K_{n}$ of codimension $m-1$.
Devadoss and Read proved this using a combinatorial technique going back to P\'olya \cite{DR}.

Using Table~2 from \cite{DR}, we get the number of non-equivalent canonical
forms of separation coordinates on $S^n$ for $n\le 10$, as listed in
Table~\ref{tab:numbers}.
\begin{table}[h]
	\caption{%
		Number of canonical forms for separation coordinates on $S^n$, ordered
		by increasing number of independent continuous parameters.
	}
	\begin{tabular}{c|rrrrrrrrrr|r}
		\toprule
		& 0 & 1 & 2 & 3 & 4 & 5 & 6 & 7 & 8 & 9 & total \\
		\midrule
		$S^2$ & 1 & 1 & & & & & & & & & 2 \\
		$S^3$ & 2 & 3 & 1 & & & & & & & & 6 \\
		$S^4$ & 3 & 8 & 5 & 1 & & & & & & & 17 \\
		$S^5$ & 6 & 20 & 22 & 8 & 1 & & & & & & 57 \\
		$S^6$ & 11 & 49 & 73 & 46 & 11 & 1 & & & & & 191 \\
		$S^7$ & 23 & 119 & 233 & 206 & 87 & 15 & 1 & & & & 684 \\
		$S^8$ & 46 & 288 & 689 & 807 & 485 & 147 & 19 & 1 & & & 2482 \\
		$S^9$ & 98 & 696 & 1988 & 2891 & 2320 & 1021 & 236 & 24 & 1 & & 9275 \\
		$S^{10}$ & 207 & 1681 & 5561 & 9737 & 9800 & 5795 & 1960 & 356 & 29 & 1 & 35127 \\
		\bottomrule
	\end{tabular}
	\label{tab:numbers}
\end{table}
The 1's on the diagonal correspond to elliptic coordinates, the numbers in the
first column to polyspherical coordinates.  Note that the sequence
$1,2,3,6,11,23,\ldots$ is the sequence of Wedderburn-Etherington numbers,
i.\,e.\ the number of non-planar binary rooted trees with $n+1$ leaves
\cite{A001190}.  This reflects the fact that we can parametrise polyspherical
coordinates by planar binary rooted trees and that the notions \emph{dyslexic}
and \emph{non-planar} coincide for binary trees.

In the first row we have $1$ and $1$, corresponding to spherical and elliptic
coordinates on $S^2$ respectively, as discussed in
Section~\ref{sec:correspondence}.  The numbers in the second row -- $2$, $3$,
$1$ -- are in perfect agreement with the results of Eisenhart \cite{Eis},
Olevski \cite{Olev} and Kalnins \& Miller \cite{KM86}.  They correspond to
spherical and cylindrical coordinates (2), two types of Lam\'e rotational
coordinates plus Lam\'e subgroup reduction (3) and elliptic coordinates (1)
respectively.  Their identification with the faces of the Stasheff polytope
$K_4$ is indicated in Figure~\ref{fig:coordinates}, in comparison to the
different labellings shown in Figure~\ref{fig:combinatorics}.  Polyspherical
coordinates comprise, for example, the usual spherical coordinates plus
cylindrical coordinates.
\begin{figure}[h]
	\begin{center}
		\includegraphics[width=.618\textwidth]{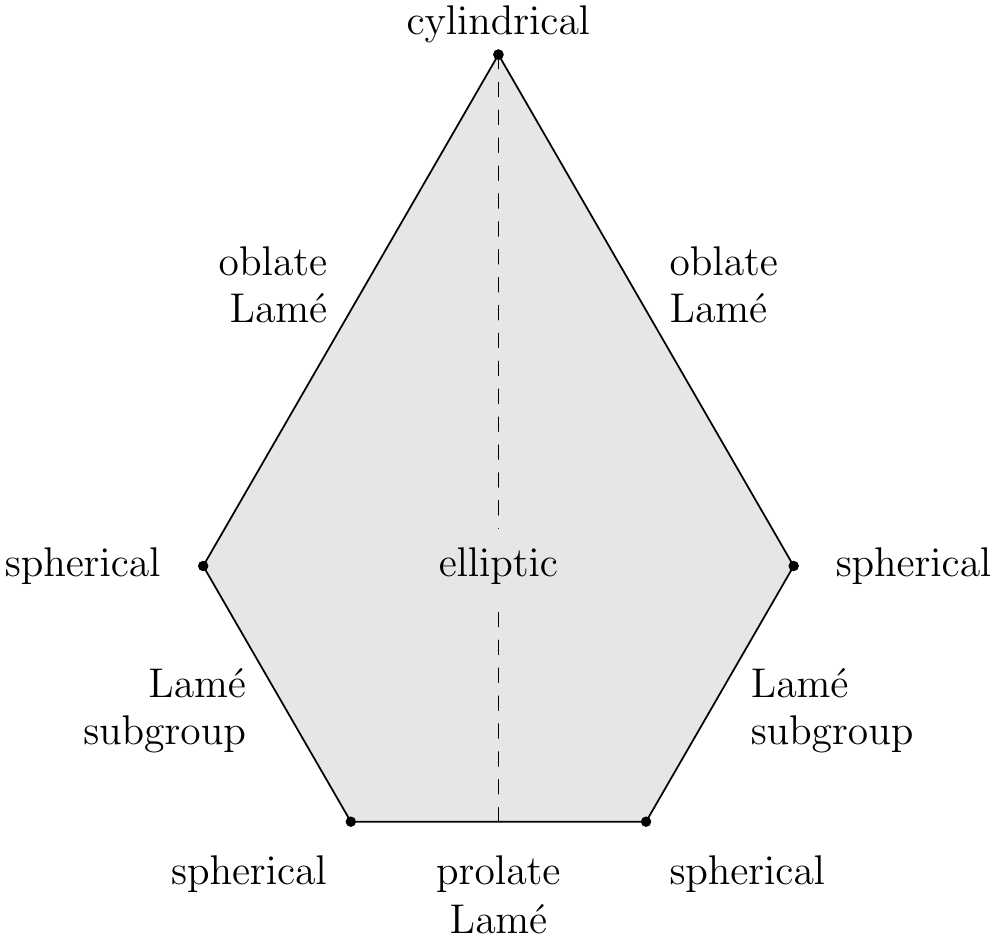}
		\caption{%
			The Stasheff polytope $K_4$, labelled by separation coordinates on
			$S^3$.
		}
		\label{fig:coordinates}
	\end{center}
\end{figure}
Note that ``the half'' of $K_4$, obtained as the quotient under the reflection
symmetry indicated in Figure~\ref{fig:coordinates}, is a quadrilateral and
that for the quotient space $Y_3=\bar M_{0,5}(\mathbb R)/S_4$ we have to identify
its two adjacent vertices that are labelled by spherical coordinates and
joined by Lam\'e subgroup reduction.

In accordance with our description, the total number of canonical forms for
separation coordinates on $S^n$, indicated in the last column in
Table~\ref{tab:numbers}, is the number of dyslexic planar rooted trees with
$n+1$ leaves \cite{A032132}.

\subsection{Constructing separation coordinates via the mosaic operad}
\label{sec:operad}

The correspondence in Theorem~\ref{thm:iso} transfers the natural operad
structure on $\bar M_{0,n+2}(\mathbb R)$ to orthogonal separation coordinates on $S^n$
and thereby yields a uniform construction procedure for separation coordinates
on spheres. Implicitly this structure is already present in \cite{KM86} (see, in particular, formula (3.14) therein).
Let us now make this operad structure explicit.  We begin with
the operad structure on $\mathcal O(n):=\mathbb R^n$ given by
\[
	\begin{array}{rrcl}
		\circ:&\mathbb R^k\times\mathbb R^{n_1}\times\ldots\times\mathbb R^{n_k}&\longrightarrow&\mathbb R^{n_1+\cdots+n_k}\\
		&(\boldsymbol y,\boldsymbol x_1,\ldots,\boldsymbol x_k)&\mapsto&\boldsymbol y\circ(\boldsymbol x_1,\ldots,\boldsymbol x_k):=(y_1\boldsymbol x_1,\ldots,y_k\boldsymbol x_k)
	\end{array}
\]
together with the permutation action of $S_n$ on $\mathbb R^n$.  This operad
structure descends to an operad structure on $\mathcal
O(n):=S^{n-1}\subset\mathbb R^n$, since $(y_1\boldsymbol
x_1)^2+\cdots(y_k\boldsymbol x_k)^2=y_1^2+\cdots y_k^2=1$ for $\boldsymbol
y\in S^{k-1}$ and $\boldsymbol x_\alpha\in S^{n_\alpha-1}$.  Note that the
composition map
\[
	\circ:S^{k-1}\times S^{n_1-1}\times\cdots\times S^{n_k-1}\longrightarrow S^{n_1+\cdots+n_k-1}
\]
describes the $k$-fold join $S^{n_1-1}\star\cdots\star S^{n_k-1}\cong
S^{n_1+\cdots+n_k-1}$ of the spheres $S^{n_1-1}$, \ldots, $S^{n_k-1}$.  This
operad structure on spheres induces an operad structure on (local) coordinates
on spheres, since coordinates $\boldsymbol x_0=\boldsymbol
x_0(\varphi_{01},\ldots,\varphi_{0,k-1})$ on $S^{k-1}\subset\mathbb R^k$ and
$\boldsymbol x_\alpha=\boldsymbol
x_\alpha(\varphi_{\alpha1},\ldots,\varphi_{\alpha,n_\alpha-1})$ on
$S^{n_\alpha-1}\subset\mathbb R^{n_\alpha}$ for $\alpha=1,\ldots,k$ determine
coordinates
\[
	\boldsymbol x=\boldsymbol x(\varphi_{01},\ldots,\varphi_{0,k-1},\varphi_{11},\ldots,\varphi_{1,n_1-1},\ldots,\varphi_{k1},\ldots,\varphi_{k,n_k-1})
\]
on $S^{n_1+\cdots+n_k-1}$, given by setting
\begin{equation}
	\label{eq:composition}
	\boldsymbol x:=\boldsymbol x_0\circ(\boldsymbol x_1,\ldots,\boldsymbol x_k).
\end{equation}
The interior of a Stasheff polytope corresponds to elliptic coordinates and
its faces are products of Stasheff polytopes.  Therefore we can construct all
orthogonal separation coordinates on spheres (modulo isometries) from elliptic
coordinates by composing them in a recursive manner via the operad composition
\eqref{eq:composition}.  Just start with trivial
coordinates $\boldsymbol x(\varnothing)=1$ on a certain number of zero
dimensional spheres $S^0$ and take elliptic coordinates for $\boldsymbol x_0$
in each step.  The different choices one has when iterating this composition
are given by the trees labelling the corresponding separation coordinates.
That is, the rooted trees describe the hierarchy of iterated decompositions of
a sphere as joins of lower dimensional spheres.  This parallels the
decomposition of the faces of a Stasheff polytope into products of lower
dimensional Stasheff polytopes.

Note that the construction \eqref{eq:polyspherical} of Vilenkin's
polyspherical coordinates on $S^{n-1}$ corresponds to the special case $k=2$ of
the above construction, starting from trivial coordinates $\boldsymbol
x(\varnothing)=1$ on $n$ copies of $S^0$ and using the (elliptic) coordinates
$\boldsymbol x_0(\varphi)=(\cos\varphi,\sin\varphi)$ on $S^{k-1}=S^1$ in each
step.

Moreover, this operad structure on separation coordinates also explains
Kalnins \& Miller's graphical procedure \cite{KM86}.  Namely, adding in an
``irreducible block'' a leaf to each box which is not joined to another block
and replacing each irreducible block by a node, the graphs in \cite{KM86}
become the trees arising from the operad structure.

\subsection{Constructing St\"ackel systems via the mosaic operad}

We now explain how this operad structure manifests itself on the level of
St\"ackel systems.  To this end, let $I_1\cup\cdots\cup I_k=I$ be a partition
of $I=\{1,\ldots,n\}$ with $\lvert I_\alpha\rvert=:n_\alpha$ and set
$I_0:=\{1,\ldots,k\}$.  We denote by $\mathfrak d_n^1$ the space of Killing
tensors on $S^{n-1}$ with diagonal algebraic curvature tensor and define the
injections
\begin{align*}
	&\iota_0:\quad\mathfrak d_k^1\;\;\,\hookrightarrow\mathfrak d_{n_1+\cdots+n_k}^1&
	\iota_0(K_{\alpha\beta})
	&:=
	\mspace{-16mu}\sum\limits_{a\in I_\alpha,b\in I_\beta}\mspace{-16mu}K_{ab}&
	\alpha,\beta&\in I_0\\
	&\iota_\alpha:\quad\mathfrak d_{n_\alpha}^1\hookrightarrow\mathfrak d_{n_1+\cdots+n_k}^1&
	\iota_\alpha(K_{ij})&:=K_{ij}&
	i,j&\in I_\alpha.
\end{align*}
\begin{prop}
	Let $\Sigma_0$ be a St\"ackel system on $S^{k-1}$ and $\Sigma_\alpha$ be St\"ackel
	systems on $S^{n_\alpha-1}$ for $\alpha=1,\ldots,k$, all consisting of
	Killing tensors with diagonal algebraic curvature tensor.  Then
	\begin{equation}
		\label{eq:Staeckel-operad}
		\Sigma_0\circ(\Sigma_1,\ldots,\Sigma_k):=\iota_0(\Sigma_0)\oplus\iota_1(\Sigma_1)\oplus\cdots\oplus\iota_k(\Sigma_k)
	\end{equation}
	is a St\"ackel system on $S^{n_1+\cdots+n_k-1}$.  Moreover, this operation
	together with the $S_n$-action \eqref{eq:groupaction} defines an
	operad structure on those St\"ackel systems on $S^{n-1}$ that consist of
	Killing tensors with diagonal algebraic curvature tensor.
\end{prop}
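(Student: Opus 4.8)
The plan is to verify the two claims separately: first that \eqref{eq:Staeckel-operad} produces a genuine St\"ackel system, and then that this composition, together with the $S_n$-action, satisfies the operad axioms. For the first claim, I would begin by checking that the right-hand side of \eqref{eq:Staeckel-operad} is an $n$-dimensional space of Killing tensors with diagonal algebraic curvature tensor. The dimension count is immediate: $\dim\iota_0(\Sigma_0)=k$ and $\dim\iota_\alpha(\Sigma_\alpha)=n_\alpha$, so the direct sum has dimension $k+n_1+\cdots+n_k$; but one must be careful, since the metrics on the various spheres are all mapped to multiples of the ambient metric. Indeed $\iota_\alpha$ sends the metric on $S^{n_\alpha-1}$ (which is $\sum_{i<j,\,i,j\in I_\alpha}K_{ij}$) into the target, and $\iota_0$ sends the metric on $S^{k-1}$ to $\sum_{\alpha<\beta}\sum_{a\in I_\alpha,b\in I_\beta}K_{ab}$; the sum of all these is the full metric $\sum_{i<j}K_{ij}$ on $S^{n-1}$. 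So the direct sum decomposition overcounts the metric, and the honest dimension is $k+n_1+\cdots+n_k-k=n_1+\cdots+n_k=n$, matching a St\"ackel system on $S^{n-1}$. I would make this transparent by noting that the $\iota_\alpha(\Sigma_\alpha)$ for $\alpha=1,\ldots,k$ involve disjoint index sets and share no common Killing tensor, while the single overlap is the metric coming from $\iota_0(\Sigma_0)$.

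The heart of the matter is the commutation property, and this is where I expect the main obstacle to lie. I must show that all Killing tensors in \eqref{eq:Staeckel-operad} mutually commute under both the Poisson and the commutator brackets. The strategy is to reduce everything to the relations of Propositions~\ref{prop:commutator:relations} and \ref{prop:Poisson:relations}, i.e.\ to the Kohno-Drinfeld relations, since by the morphisms \eqref{eq:morphisms} it suffices to check commutation in $\mathfrak t_n$. There are three types of pairs to examine. Commutators within a single $\iota_\alpha(\Sigma_\alpha)$ vanish because $\Sigma_\alpha$ is a St\"ackel system and $\iota_\alpha$ is simply the inclusion of index sets. Commutators between $\iota_\alpha(\Sigma_\alpha)$ and $\iota_\beta(\Sigma_\beta)$ for $\alpha\neq\beta$ vanish by relation \eqref{eq:commutator:relations:disjoint} (resp.\ \eqref{eq:Poisson:relations:disjoint}), because the index sets $I_\alpha$ and $I_\beta$ are disjoint, so every generator $K_{ij}$ with $i,j\in I_\alpha$ commutes with every $K_{kl}$ with $k,l\in I_\beta$. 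The genuinely delicate case is commutation between $\iota_0(\Sigma_0)$ and $\iota_\alpha(\Sigma_\alpha)$, since $\iota_0(K_{\alpha\beta})=\sum_{a\in I_\alpha,b\in I_\beta}K_{ab}$ shares indices with $\iota_\alpha(\Sigma_\alpha)$.

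For this last case I would argue as follows. A generator of $\iota_\alpha(\Sigma_\alpha)$ is $K_{ij}$ with $i,j\in I_\alpha$, and an element of $\iota_0(\Sigma_0)$ is a combination of the $\iota_0(K_{\gamma\delta})$ with $\gamma,\delta\in I_0$. If neither $\gamma$ nor $\delta$ equals $\alpha$, the index sets are disjoint and \eqref{eq:commutator:relations:disjoint} applies termwise. The interesting subcase is $\gamma=\alpha$ (or $\delta=\alpha$), where $\iota_0(K_{\alpha\delta})=\sum_{a\in I_\alpha,b\in I_\delta}K_{ab}$; here I want $[K_{ij},\sum_{a\in I_\alpha,b\in I_\delta}K_{ab}]=0$ for $i,j\in I_\alpha$ and $\delta\neq\alpha$. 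Expanding, the only surviving terms have $a\in\{i,j\}$, and I obtain $\sum_{b\in I_\delta}[K_{ij},K_{ib}+K_{jb}]$, which vanishes term-by-term by the overlapping relation \eqref{eq:commutator:relations:overlapping}. Since $\Sigma_0$ consists of Killing tensors built from the $K_{\gamma\delta}$ that commute among themselves, and $\iota_0$ is a Lie algebra homomorphism onto its image (which I would confirm by checking that $\iota_0$ respects the Kohno-Drinfeld relations --- this is essentially the content of the injections being compatible with the operad gluing of polygons), the system $\iota_0(\Sigma_0)$ is itself abelian and commutes with each $\iota_\alpha(\Sigma_\alpha)$. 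The identical computation with the cubic Poisson brackets, using \eqref{eq:Poisson:relations:overlapping}, settles the $\mathscr D_n$ case. Finally, for the operad axioms (identity, associativity, equivariance), I would observe that these follow by direct inspection: the identity is the trivial St\"ackel system on $S^0$, associativity reflects the transitivity of iterated partitions of $\{1,\ldots,n\}$ into blocks of blocks, and equivariance with respect to the $S_n$-action \eqref{eq:groupaction} is immediate from the fact that relabelling indices by $\sigma$ commutes with forming the sums $\sum_{a\in I_\alpha,b\in I_\beta}K_{ab}$ --- these are routine verifications paralleling the polygon-gluing description of the mosaic operad in Section~3, so I would not belabour them.
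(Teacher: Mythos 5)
Your treatment of the commutation property is essentially the paper's own argument: the same three-case split ($p,q\neq0$ with disjoint blocks handled by \eqref{eq:commutator:relations:disjoint}, the case $p=q=0$ via the fact that $\iota_0$ preserves the Kohno--Drinfeld relations, and the crucial mixed case reducing $[K_{ij},\iota_0(K_{\alpha\delta})]$ to $\sum_{b\in I_\delta}[K_{ij},K_{ib}+K_{jb}]=0$ by the overlapping relation), and the same light touch on the operad axioms. That part is correct and needs no changes.

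The dimension count, however, contains a genuine error. By Definition~\ref{def:Staeckel}, a St\"ackel system on the $(k-1)$-dimensional manifold $S^{k-1}$ is a $(k-1)$-dimensional space of Killing tensors, not a $k$-dimensional one, so $\dim\Sigma_0=k-1$ and $\dim\Sigma_\alpha=n_\alpha-1$. More importantly, the ``overlap at the metric'' you invoke in order to subtract $k$ does not exist: $\iota_0(\Sigma_0)$ is spanned by cross-block sums $\sum_{a\in I_\alpha,b\in I_\beta}K_{ab}$ with $\alpha\neq\beta$, while each $\iota_\alpha(\Sigma_\alpha)$ is spanned by within-block generators $K_{ij}$ with $i,j\in I_\alpha$; these involve disjoint subsets of the basis $\{K_{ab}\}$, so the sum in \eqref{eq:Staeckel-operad} is honestly direct. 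The images of the $k+1$ metrics are linearly independent elements whose \emph{sum} is the full metric of $S^{n-1}$ --- no single one of them equals that metric, and none lies in two summands. The correct count is therefore $(k-1)+\sum_\alpha(n_\alpha-1)=n-1$, which is exactly the dimension required of a St\"ackel system on the $(n-1)$-dimensional manifold $S^{n-1}$. Your answer $n$ would be off by one even under your own convention, since without an actual overlap the sum of spaces of dimensions $k$ and $n_1,\ldots,n_k$ has dimension $k+n$, not $n$. Establishing that the sum is direct of dimension $n_1+\cdots+n_k-1$ is the first step of the paper's proof, and your version of that step fails as written.
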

\begin{proof}
	First observe that the sum on the right hand side of
	\eqref{eq:Staeckel-operad} is indeed a direct sum.  Hence its dimension is
	$n_1+\cdots+n_k-1$.  By Definition~\ref{def:Staeckel} and
	Remark~\ref{rem:commutator-equivalence}, we have to show that all Killing
	tensors in this subspace commute.  That is, a Killing tensor from
	$\iota_p(\Sigma_p)$ and another one from $\iota_q(\Sigma_q)$ commute for all
	$p,q=0,1,\ldots,k$.  For $p,q\not=0$ this is evident.  For $p=q=0$ one
	readily checks that the inclusion $\iota_0$ preserves the relations
	\eqref{eq:commutator:relations} and hence maps commuting Killing tensors
	to commuting Killing tensors.  In the remaining case $p\not=0=q$ the
	commutator
	\[
		[\iota_p(K_{ij}),\iota_0(K_{\alpha\beta})]
		=[K_{ij},\mspace{-16mu}\sum\limits_{a\in I_\alpha,b\in I_\beta}\mspace{-16mu}K_{ab}]
		\qquad
		i,j\in I_p,
		\quad
		\alpha\not=\beta\in I_0
	\]
	is zero unless $p=\alpha$ or $p=\beta$.  But if $p=\alpha$, the sum over
	$a\in I_\alpha$ only contributes non-zero terms for $a=i$ and $a=j$.  So
	the above commutator reduces to
	\[
		\sum_{b\in I_\alpha}[K_{ij},K_{ib}+K_{jb}]=0
	\]
	due to the relations \eqref{eq:commutator:relations}, and similarly for
	$p=\beta$.  This proves that \eqref{eq:Staeckel-operad} is a St\"ackel
	system.

	To check that this composition defines an operad is straightforward.  The
	identity element is the empty St\"ackel system on $S^0$ and equivariance
	is obvious.  Associativity can be shown by taking subdivisions
	$I_{\alpha}=I_{\alpha1}\cup\cdots\cup I_{\alpha k_\alpha}$ of $I_\alpha$
	for all $\alpha\in I_0$ and considering the corresponding inclusions for
	Killing tensors.  The details will be left to the reader.
\end{proof}

To give an example, let us construct the St\"ackel system for standard
spherical coordinates on $S^{n-1}$ by choosing $k=2$ with the St\"ackel system
$\Sigma_0$ on $S^{k-1}=S^1$ spanned by $K_{12}$, starting from empty St\"ackel
systems on $n$ copies of $S^0$ and taking $n_2=1$ in each step.  This yields
the St\"ackel system spanned by
\[
	K_{12},\quad K_{13}+K_{23},\quad K_{14}+K_{24}+K_{34},\quad\dots
\]
and shows that the Jucys-Murphy subalgebras \eqref{eq:Jucys-Murphy} in the
Kohno-Drinfeld Lie algebra correspond to standard spherical coordinates.

\section{Outlook}

We have shown that the theory of Deligne-Mumford-Knudsen moduli spaces and
Stasheff polytopes provides the right framework for the classification and
construction of all orthogonal separation coordinates on spheres.  In particular, we
elucidated the natural algebro-geometric structure of the parameter space
classifying isometry classes of separation coordinates, which for a long time
had only been known as a mere set, and gave a precise description of its
topology.

It would be very interesting to see if the same approach will work in a more
general situation. In particular, one can use the algebraic approach of
\cite{Schoebel} to study the orthogonal separation coordinates for all
(pseudo-)Riemannian constant curvature manifolds, such as hyperbolic space
$\mathbb H^n$.  The question is whether the corresponding moduli spaces of
separation coordinates are related to any known algebro-geometric moduli
spaces or families of polyhedra.  This will be subject for future research.

\section{Acknowledgements}

Both of us would like to thank the Hausdorff Research Institute for
Mathematics in Bonn for its hospitality in March 2012, when this work was
essentially done.  We are also grateful to Prof.\ S.~Devadoss, who attracted
our attention to the paper \cite{DR}, and to Prof.\ J.~Stasheff for his
very useful comments.

\end{document}